\pdfoutput=1
\RequirePackage{ifpdf}
\ifpdf 
\documentclass[pdftex]{sigma}
\else
\documentclass{sigma}
\fi

\numberwithin{equation}{section}
\newtheorem{Theorem}{Theorem}[section]
\newtheorem{Corollary}[Theorem]{Corollary}
\newtheorem{Lemma}[Theorem]{Lemma}

\begin{document}

\allowdisplaybreaks

\newcommand{\arXivNumber}{1904.07309}

\renewcommand{\thefootnote}{}

\renewcommand{\PaperNumber}{035}

\FirstPageHeading

\ShortArticleName{Duality for Knizhnik--Zamolodchikov and Dynamical Operators}

\ArticleName{Duality for Knizhnik--Zamolodchikov\\ and Dynamical Operators\footnote{This paper is a~contribution to the Special Issue on Representation Theory and Integrable Systems in honor of Vitaly Tarasov on the 60th birthday and Alexander Varchenko on the 70th birthday. The full collection is available at \href{https://www.emis.de/journals/SIGMA/Tarasov-Varchenko.html}{https://www.emis.de/journals/SIGMA/Tarasov-Varchenko.html}}}

\Author{Vitaly TARASOV~$^{\dag\ddag}$ and Filipp UVAROV~$^\dag$}

\AuthorNameForHeading{V.~Tarasov and F.~Uvarov}

\Address{$^\dag$~Department of Mathematical Sciences, Indiana University -- Purdue University Indianapolis,\\
\hphantom{$^\dag$}~402 North Blackford St, Indianapolis, IN 46202-3216, USA}
\EmailD{\href{mailto:vtarasov@iupui.edu}{vtarasov@iupui.edu}, \href{mailto:filuvaro@iu.edu}{filuvaro@iu.edu}}

\Address{$^\ddag$~St.~Petersburg Branch of Steklov Mathematical Institute,\\
\hphantom{$^\ddag$}~Fontanka 27, St.~Petersburg, 191023, Russia}
\EmailD{\href{mailto:vt@pdmi.ras.ru}{vt@pdmi.ras.ru}}

\ArticleDates{Received February 25, 2020, in final form April 10, 2020; Published online April 25, 2020}

\Abstract{We consider the Knizhnik--Zamolodchikov and dynamical operators, both differential and difference, in the context of the $(\mathfrak{gl}_{k}, \mathfrak{gl}_{n})$-duality for the space of polynomials in~$kn$ anticommuting variables. We show that the Knizhnik--Zamolodchikov and dynamical operators naturally exchange under the duality.}

\Keywords{Knizhnik--Zamolodchikov operators; dynamical operators; the $(\mathfrak{gl}_{k}, \mathfrak{gl}_{n})$-duality}

\Classification{17B37; 81R10; 81R50; 39A12}

\renewcommand{\thefootnote}{\arabic{footnote}}
\setcounter{footnote}{0}

\section{Introduction}
The Knizhnik--Zamolodchikov (KZ) operators is a family of pairwise commuting differential operators acting on $U(\mathfrak{gl}_{k})^{\otimes n}$-valued functions. They play an important role in conformal field theory, representation theory, and they are closely related to the famous Gaudin Hamiltonians. The difference analogue of the KZ operators is the quantum Knizhnik--Zamolodchikov (qKZ) operators. There are rational, trigonometric, and elliptic versions of the KZ and qKZ operators. For a review and references see, for example, \cite{EFK}.

There exist other families of commuting differential or difference operators called the dynamical differential (DD) or dynamical difference (qDD) operators, respectively. There are rational and trigonometric versions of the DD and qDD operators as well. It is known that the rational DD operators commute with the rational KZ operators, the trigonometric DD operators commute with the rational qKZ operators, and the rational qDD operators commute with the trigonometric KZ operators, see \cite{FMTV,TV3,TV2}. The qDD operators appear as the action of the dynamical Weyl groups~\cite{EV}. The DD operators are also known as the Casimir connection, see~\cite{TL1,TL2}.

\looseness=-1 Together with the KZ, DD, qKZ, and qDD operators associated with $U(\mathfrak{gl}_{k})^{\otimes n}$, we will simultaneously consider similar operators associated with $U(\mathfrak{gl}_{n})^{\otimes k}$ interchanging~$k$ and~$n$. Let~$P_{kn}$ be the space of polynomials in $kn$ commuting variables. One can define actions of $U(\mathfrak{gl}_{k})^{\otimes n}$ and $U(\mathfrak{gl}_{n})^{\otimes k}$ on the space $P_{kn}$. It is well known that the $U(\mathfrak{gl}_{k})^{\otimes n}$- and $U(\mathfrak{gl}_{n})^{\otimes k}$-actions on $P_{kn}$ commute, which manifests the $(\mathfrak{gl}_{k},\mathfrak{gl}_{n})$-duality, see, for example, \cite{CW}. Consider the images of the KZ, DD, qKZ, qDD operators associated with $U(\mathfrak{gl}_{k})^{\otimes n}$ and $U(\mathfrak{gl}_{n})^{\otimes k}$ under the corresponding actions. It was proved in \cite{TV} that the images of the rational (trigonometric) KZ operators associated with $U(\mathfrak{gl}_{n})^{\otimes k}$ coincide with the images of the rational (trigonometric) DD operators associated with $U(\mathfrak{gl}_{k})^{\otimes n}$. Similarly, the images of the rational qKZ operators associated with $U(\mathfrak{gl}_{n})^{\otimes k}$, up to an action of a central element in $\mathfrak{gl}_{n}$, coincide with the images of the rational qDD operators associated with $U(\mathfrak{gl}_{k})^{\otimes n}$. In this paper we obtain a similar duality for the case of $U(\mathfrak{gl}_{k})^{\otimes n}$- and $U(\mathfrak{gl}_{n})^{\otimes k}$-actions on the space of polynomials in~$kn$ anticommuting variables, see Theorem~\ref{main}.

The duality for the rational and trigonometric KZ and DD operators is proved in a straightforward way. To prove the duality for the rational qKZ and qDD operators, we study the eigenvalues of the rational $R$-matrix and compare them to the eigenvalues of the operator $B^{\langle 2\rangle}_{12}(t)$, which is used in the construction of the qDD operators.

The $(\mathfrak{gl}_{k}, \mathfrak{gl}_{n})$-duality for classical integrable models related to Gaudin Hamiltonians and the actions of $\mathfrak{gl}_{k}$ and $\mathfrak{gl}_{n}$ on the space of polynomials in anticommuting variables was studied in \cite[Section~3.3]{VY}. The result of \cite{VY} resembles what one can expect for Bethe algebras of Gaudin models discussed in our work. We will study those Bethe algebras in an upcoming paper.

The paper is organized as follows. In Section~\ref{section2}, we introduce necessary notations. In Section~\ref{section3}, we define the KZ, DD, qKZ, and qDD operators. In Section~\ref{section4}, we formulate and prove the main result.

\section{Basic notation}\label{section2}

Let $e_{ab}$, $a,b=1,\dots , k$, be the standard basis of the Lie algebra $\mathfrak{gl}_{k}$: $[e_{ab},e_{cd}]=\delta_{bc}e_{ad}-\delta_{ad}e_{cb}$. We take the Cartan subalgebra $\mathfrak{h}\subset\mathfrak{gl}_{k}$ spanned by $e_{11},\dots ,e_{kk}$, and the nilpotent subalgebras~$\mathfrak{n}_{+}$ and~$\mathfrak{n}_{-}$ spanned by the elements~$e_{ab}$ for $a<b$ and $a>b$ respectively. We have standard Gauss decomposition $\mathfrak{gl}_{k}= \mathfrak{n}_{+}\oplus\mathfrak{h}\oplus\mathfrak{n}_{-}$.

Let $\varepsilon_{1},\dots ,\varepsilon_{k}$ be the basis of $\mathfrak{h}^{*}$ dual to $e_{11},\dots ,e_{kk}$: $\langle\varepsilon_{a},e_{bb}\rangle=\delta_{ab}$. We identify $\mathfrak{h}^{*}$ with~$\mathbb{C}^{k}$ mapping $l_{1}\varepsilon_{1}+\dots +l_{k}\varepsilon_{k}$ to $(l_{1},\dots ,l_{k})$. The root vectors of $\mathfrak{gl}_{k}$ are $e_{ab}$ for $a\neq b$, the corresponding root being equal to $\alpha_{ab}=\varepsilon_{a}-\varepsilon_{b}$. The roots $\alpha_{ab}$ for $a<b$ are positive. The simple roots are $\alpha_{1},\dots ,\alpha_{k-1}$: $\alpha_{a}=\varepsilon_{a}-\varepsilon_{a+1}$. Denote by $\rho$ the half-sum of positive roots.

We choose the standard invariant bilinear form $(\,,\,)$ on $\mathfrak{gl}_{k}$: $(e_{ab},e_{cd})=\delta_{ad}\delta_{bc}$. It defines an isomorphism $\mathfrak{h}\rightarrow\mathfrak{h}^{*}$ The induced bilinear form on $\mathfrak{h}^{*}$ is $(\varepsilon_{a},\varepsilon_{b})=\delta_{ab}$.

For a $\mathfrak{gl}_{k}$-module $W$ and a weight $\boldsymbol{l}\in\mathfrak{h}^{*}$, let $W[\boldsymbol{l}]$ be the weight subspace of $W$ of weight $\boldsymbol{l}$.

For any $\boldsymbol{l}=(l_{1},\dots ,l_{k})$ with $l_{a}\geq l_{a+1}$, we denote by $V_{\boldsymbol{l}}$ the irreducible $\mathfrak{gl}_{k}$-module with highest weight $\boldsymbol{l}$. Also, for any $m\in\mathbb{Z}_{\geq0}$ we write $V_{m}$ instead of $V_{(1^{m})}$, where $(1^{m}) = (\underbrace{1,\dots ,1}_{k},0,\dots ,0)$. Thus, $V_{0}=\mathbb{C}$ is the trivial $\mathfrak{gl}_{k}$-module, $V_{1}=\mathbb{C}^{k}$ with the natural action of $\mathfrak{gl}_{k}$, and $V_{m}$ is the $m$-th antisymmetric power of~$V_{1}$.

The element $I=\underset{a,b=1}{\overset{k}{\sum}}e_{ab}e_{ba}$ is central in $U(\mathfrak{gl}_{k})$. It acts as multiplication by $(\boldsymbol{l},\boldsymbol{l}+2\rho)$ in the irreducible $\mathfrak{gl}_{k}$-module $V_{\boldsymbol{l}}$.

Consider the algebra $\mathfrak{X}_{k}=\bigwedge^{\bullet}\mathbb{C}^{k}$, which will be identified with the ring of all polynomials in anticommuting variables $x_{1},\dots ,x_{k}$. In parrticular, $x_{i}^{2}=0$ for all $i=1,\dots ,k$.

We define left derivation $\partial_{a}$ as follows: if $g(x)=x_{b_{1}}\dots x_{b_{m}}$ for some $m$, where $b_{s}\neq a$ for any~$s$, then
\begin{gather*}
\partial_{a}g(x)=0,\qquad \partial_{a}(x_{a}g(x))=g(x).
\end{gather*}
The operators of left multiplication by $x_{1},\dots,x_{k}$ and left derivations $\partial_{1},\dots,\partial_{k}$ make the space~$\mathfrak{X}_{k}$ into the irreducible representation of the Clifford algebra $\operatorname{Cliff}_{k}$.

The Lie algebra $\mathfrak{gl}_{k}$ acts on the space $\mathfrak{X}_{k}$ by the rule: $e_{ab}\cdot p = x_{a}\partial_{b}p$ for any $p\in\mathfrak{X}_{k}$. Denote the obtained $\mathfrak{gl}_{k}$-module by $V_{\bullet}$, then
\begin{gather}\label{dec1}
V_{\bullet}=\bigoplus_{l=0}^{k}V_{l},
\end{gather}
the submodule $V_{l}$ being spanned by homogeneous polynomials of degree $l$. A highest weight vector of the submodule $V_{l}$ is $x_{1}x_{2}\cdots x_{l}$.

The $\mathfrak{gl}_{k}$-action on $\mathfrak{X}_{k}$ naturaly extends to a $U(\mathfrak{gl}_{k})^{\otimes n}$-action on $(\mathfrak{X}_{k})^{\otimes n}$.
For any $g\in U(\mathfrak{gl}_{k})$, set $g_{(i)}=1\otimes\cdots\otimes\underset{i\text{-th}}{g}\otimes\dots\otimes 1\in U(\mathfrak{gl}_{k})^{\otimes n}$. We consider $U(\mathfrak{gl}_{k})$ as the diagonal subalgebra of $U(\mathfrak{gl}_{k})^{\otimes n}$, that is, the embedding $U(\mathfrak{gl}_{k})\hookrightarrow U(\mathfrak{gl}_{k})^{\otimes n}$ is given by the n-fold coproduct: $x\mapsto x_{(1)}+\dots +x_{(n)}$ for any $x\in\mathfrak{gl}_{k}$. This corresponds to the standard $\mathfrak{gl}_{k}$-module structure on~$(\mathfrak{X}_{k})^{\otimes n}$ as the tensor product of $\mathfrak{gl}_{k}$-modules.

Let $\Omega=\underset{a,b=1}{\overset{k}{\sum}}e_{ab}\otimes e_{ba}$ be the Casimir tensor, and let
\begin{gather*}
\Omega^{+}=\frac{1}{2} \underset{a=1}{\overset{k}{\sum}}e_{aa}\otimes e_{aa}+\underset{1\leq a<b\leq k}{\sum}e_{ab}\otimes e_{ba},\\
\Omega^{-}=\frac{1}{2}\,\underset{a=1}{\overset{k}{\sum}}e_{aa}\otimes e_{aa}+\underset{1\leq a<b\leq k}{\sum}e_{ba}\otimes e_{ab},
\end{gather*}
so that $\Omega=\Omega^{+}+\Omega^{-}$.

\section{The KZ, qKZ, DD and qDD operators}\label{section3}
Fix a nonzero complex number $\kappa$. Consider differential operators $\nabla_{z_{1}},\dots ,\nabla_{z_{n}}$, and $\widehat{\nabla}_{z_{1}},\dots ,\widehat{\nabla}_{z_{n}}$ with coefficients in $U(\mathfrak{gl}_{k})^{\otimes n}$ depending on complex variables $z_{1},\dots ,z_{n}$, and $\lambda_{1},\dots ,\lambda_{k}$:
\begin{gather*}
\nabla_{z_{i}}(z;\lambda)=\kappa\frac{\partial}{\partial z_{i}}-\sum_{a=1}^{k}\lambda_{a}(e_{aa})_{(i)}-\sum_{j=1,\, j\neq i}^{n}\frac{\Omega_{(ij)}}{z_{i}-z_{j}},
\\
\widehat{\nabla}_{z_{i}}(z;\lambda)=\kappa z_{i}\frac{\partial}{\partial z_{i}}-\sum_{a=1}^{k}\left(\lambda_{a}-\frac{e_{aa}}{2}\right)(e_{aa})_{(i)}-\sum_{j=1,\, j\neq i}^{n}\frac{z_{i}\Omega^{+}_{(ij)}+z_{j}\Omega^{-}_{(ij)}}{z_{i}-z_{j}}.
\end{gather*}

The differential operators $\nabla_{z_{1}},\dots ,\nabla_{z_{n}}$ (resp., $\widehat{\nabla}_{z_{1}},\dots,\widehat{\nabla}_{z_{n}}$) are called the \textit{rational} (resp., \textit{trigonometric}) \textit{Knizhnik--Zamolodchikov (KZ) operators}.

Introduce differential operators $D_{\lambda_{1}},\dots ,D_{\lambda_{k}}$, and $\widehat{D}_{\lambda_{1}},\dots ,\widehat{D}_{\lambda_{k}}$ with coefficients in $U(\mathfrak{gl}_{k})^{\otimes n}$ depending on complex variables $z_{1},\dots ,z_{n}$, and $\lambda_{1},\dots ,\lambda_{k}$:
\begin{gather*}
D_{\lambda_{a}}(z;\lambda)=\kappa\frac{\partial}{\partial \lambda_{a}}-\sum_{i=1}^{n}z_{i}(e_{aa})_{(i)}-\sum_{b=1,\, b\neq a}^{k}\frac{e_{ab}e_{ba}-e_{aa}}{\lambda_{a}-\lambda_{b}},
\\
\widehat{D}_{\lambda_{a}}(z;\lambda) =\kappa\lambda_{a}\frac{\partial}{\partial\lambda_{a}}+\frac{e_{aa}^2}{2}-\sum_{i=1}^{n}z_{i}(e_{aa})_{(i)} \\
\hphantom{\widehat{D}_{\lambda_{a}}(z;\lambda) =}{} -\sum_{b=1}^{k}\sum_{1\leq i<j\leq n}(e_{ab})_{(i)}(e_{ba})_{(j)} -\sum_{b=1,\, b\neq a}^{k}\frac{\lambda_{b}}{\lambda_{a}-\lambda_{b}}(e_{ab}e_{ba}-e_{aa}).
\end{gather*}

The differential operators $D_{\lambda_{1}},\dots ,D_{\lambda_{k}}$ (resp. $\widehat{D}_{\lambda_{1}},\dots ,\widehat{D}_{\lambda_{k}}$) are called the \textit{rational} (resp., \textit{trigonometric}) \textit{differential dynamical} (DD) \textit{operators}, see \cite{FMTV,TV2}.

For any $a,b=1,\dots ,k$, $a\neq b$, introduce the series $B_{ab}(t)$ depending on a complex variable~$t$:
\begin{gather*}
B_{ab}=1+\sum_{s=1}^{\infty}\frac{e_{ba}^{s}e_{ab}^{s}}{s!}\prod_{j=1}^{s}(t-e_{aa}+e_{bb}-j)^{-1}.
\end{gather*}
The action of this series is well defined in any finite-dimensional $\mathfrak{gl}_{k}$-module $W$, giving an End($W$)-valued rational function of~$t$.

Denote $\lambda_{bc}=\lambda_{b}-\lambda_{c}$. Consider the products $X_{1},\dots ,X_{k}$ depending on the complex variables $z_{1},\dots ,z_{n}$, and $\lambda_{1},\dots ,\lambda_{k}$:
\begin{gather*}
X_{a}(z;\lambda)=(B_{ak}(\lambda_{ak})\cdots B_{a,a+1}(\lambda_{a,a+1}))^{-1} \\
\hphantom{X_{a}(z;\lambda)=}{}\times \prod_{i=1}^{n}\big(z_{i}^{-e_{aa}}\big)_{(i)}B_{1a}(\lambda_{1a}-\kappa)\cdots B_{a-1,a}(\lambda_{a-1,a}-\kappa).
\end{gather*}
The products $X_{1},\dots ,X_{k}$ act in any $n$-fold tensor product $W_{1}\otimes\dots\otimes W_{n}$ of finite-dimensional $\mathfrak{gl}_{k}$-modules.

Denote by $T_{u}$ a difference operator acting on a function $f(u)$ by
\[
(T_{u}f)(u)=f(u+\kappa).
\]
Introduce difference operators $Q_{\lambda_{1}},\dots ,Q_{\lambda_{k}}$:
\begin{gather*}
Q_{\lambda_{a}}(z;\lambda)=X_{a}(z;\lambda)T_{\lambda_{a}}.
\end{gather*}
The operators $Q_{\lambda_{1}},\dots ,Q_{\lambda_{k}}$ are called the (\textit{rational}) \textit{difference dynamical} (qDD) \textit{operators}~\cite{TV3}.

For any finite-dimensional irreducible $\mathfrak{gl}_{k}$-modules $V$ and $W$, there is a distinguished rational function $R_{VW}(t)$ of $t$ with values in $\operatorname{End}(V\otimes W)$ called the rational $R$-matrix. It is uniquely determined by the $\mathfrak{gl}_{k}$-invariance
\begin{gather}\label{inv}
[R_{VW}(t),g\otimes 1+1\otimes g]=0\qquad\text{for any}\quad g\in\mathfrak{gl}_k,
\end{gather}
the commutation relations
\begin{gather}\label{comrel}
R_{VW}(t)\left(te_{ab}\otimes 1+\sum_{c=1}^{k}e_{ac}\otimes e_{cb}\right)=\left(te_{ab}\otimes 1+\sum_{c=1}^{k}e_{cb}\otimes e_{ac}\right)R_{VW}(t),
\end{gather}
and the normalization condition
\begin{gather}\label{norm}
R_{VW}(t)v\otimes w=v\otimes w,
\end{gather}
where $v$ and $w$ are the highest weight vectors of $V$ and $W$, respectively.

Denote $z_{ij}=z_{i}-z_{j}$ and $R_{ij}(t)=(R_{W_{i}W_{j}}(t))_{(ij)}$. Consider the products $K_{1},\dots ,K_{n}$ depending on the complex variables $z_{1},\dots ,z_{n}$, and $\lambda_{1},\dots ,\lambda_{k}$:
\begin{gather*}
K_{i}(z;\lambda)=(R_{in}(z_{in})\cdots R_{i,i+1}(z_{i,i+1}))^{-1}\prod_{a=1}^{k}\big(\lambda_{a}^{-e_{aa}}\big)_{(i)}R_{1i}(z_{1i}-\kappa)\cdots R_{i-1,i}(z_{i-1,i}-\kappa).
\end{gather*}
The products $K_{1},\dots ,K_{n}$ act in any $n$-fold tensor product $W_{1}\otimes\cdots\otimes W_{n}$ of $\mathfrak{gl}_{k}$-modules.

Introduce difference operators $Z_{z_{1}},\dots ,Z_{z_{n}}$:
\begin{gather*}
Z_{z_{i}}(z;\lambda)=K_{i}(z;\lambda)T_{z_{i}}.
\end{gather*}
The operators $Z_{z_{1}},\dots ,Z_{z_{n}}$ are called (\textit{rational}) \textit{quantized Knizhnik--Zamolodchikov} (qKZ) ope\-ra\-tors.

It is known that the introduced operators combine into three commutative families, see \cite{FMTV,TV3,TV2} for more references.
\begin{Theorem}The operators $\nabla_{z_{1}},\dots ,\nabla_{z_{k}}$, $D_{\lambda_{1}},\dots ,D_{\lambda_{k}}$ pairwise commute.
\end{Theorem}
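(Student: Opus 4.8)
The statement is equivalent to three families of identities, $[\nabla_{z_i},\nabla_{z_j}]=0$, $[D_{\lambda_a},D_{\lambda_b}]=0$, and $[\nabla_{z_i},D_{\lambda_a}]=0$, for all values of the indices; these express respectively the flatness of the rational KZ connection, the flatness of the rational dynamical (Casimir) connection, and their compatibility, all of which are proved in \cite{FMTV,TV2}. My plan is to establish each by a direct computation of the commutator of the two first-order operators, writing $\nabla_{z_i}=\kappa\,\partial/\partial z_i+A_i$ and $D_{\lambda_a}=\kappa\,\partial/\partial\lambda_a+C_a$ with $A_i,C_a$ the respective zeroth-order parts.

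For $[\nabla_{z_i},\nabla_{z_j}]$ I would argue in the standard way: $[\partial_{z_i},\partial_{z_j}]=0$; the terms in which $\kappa\,\partial_{z_i}$ differentiates the coefficient $\Omega_{(ij)}/(z_i-z_j)$ of $A_j$, and symmetrically, produce opposite double poles $\mp\kappa\,\Omega_{(ij)}/(z_i-z_j)^2$ that cancel; the cross terms between the weight part $-\sum_a\lambda_a(e_{aa})_{(i)}$ and the $\Omega$-part cancel because $\Omega$ is $\mathfrak{gl}_k$-invariant and $1/(z_i-z_j)+1/(z_j-z_i)=0$; and $[A_i,A_j]$ is reduced by partial fractions to the infinitesimal braid relations $[\Omega_{(ij)},\Omega_{(ip)}+\Omega_{(jp)}]=0$ and $[\Omega_{(ij)},\Omega_{(pq)}]=0$ for disjoint indices, both consequences of $[\Omega_{(ij)},g_{(i)}+g_{(j)}]=0$. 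The computation for $[D_{\lambda_a},D_{\lambda_b}]$ has the same shape in the $\lambda$-variables: one first records the identity $C_{ab}:=e_{ab}e_{ba}-e_{aa}=e_{ba}e_{ab}-e_{bb}$, equivalent to $[e_{ab},e_{ba}]=e_{aa}-e_{bb}$, so that the double-pole anomalies from $\kappa\,\partial_{\lambda_a}$ hitting $C_{ab}/(\lambda_a-\lambda_b)$ again cancel; since $C_{ab}$ has zero $\mathfrak{h}$-weight it commutes with all $e_{cc}$, which kills the cross terms with $-\sum_i z_i(e_{aa})_{(i)}$; and the remaining bracket $[\sum_c C_{ac}/(\lambda_a-\lambda_c),\sum_d C_{bd}/(\lambda_b-\lambda_d)]$ together with the surviving anomaly reduces, via partial fractions, to the $\mathfrak{sl}_2$-type three-term relations among the $C_{ab}$ that underlie flatness of the Casimir connection.

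For the mixed bracket, $z_i$ occurs in $C_a$ only through $-z_i(e_{aa})_{(i)}$ and $\lambda_a$ occurs in $A_i$ only through $-\lambda_a(e_{aa})_{(i)}$, so the first-order cross terms give $\kappa\,\partial_{z_i}C_a-\kappa\,\partial_{\lambda_a}A_i=-\kappa(e_{aa})_{(i)}+\kappa(e_{aa})_{(i)}=0$ and it remains to show $[A_i,C_a]=0$. The bracket of the two weight terms vanishes; the bracket $\big[\sum_{j\neq i}\Omega_{(ij)}/(z_i-z_j),\sum_l z_l(e_{aa})_{(l)}\big]$ collapses — because $[\Omega_{(ij)},(e_{aa})_{(i)}+(e_{aa})_{(j)}]=0$ turns each summand into $[\Omega_{(ij)},(e_{aa})_{(i)}]$ times $(z_i-z_j)$, cancelling the pole — to the $z$-independent sum $\sum_{j\neq i}[\Omega_{(ij)},(e_{aa})_{(i)}]$; and the remaining brackets of $\Omega_{(ij)}$ and of the weight term against the poles $C_{ac}/(\lambda_a-\lambda_c)$ are treated by the same invariance and weight-zero manipulations combined with partial fractions in $\lambda$. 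After grouping everything, I expect to be left with a single algebraic identity in $U(\mathfrak{gl}_k)^{\otimes n}$ that follows from $[e_{ab},e_{cd}]=\delta_{bc}e_{ad}-\delta_{ad}e_{cb}$. The hard part will be the bookkeeping in this last step, and in the Casimir-connection relations: terms carrying genuinely different $z$- and $\lambda$-dependence must be collected in exactly the right groups before the Lie-algebra identities apply. Since this is precisely the content of \cite{FMTV} for the mixed bracket, and classical for the two single-family statements, in practice I would present the decomposition above and cite \cite{FMTV,TV2} for the detailed verifications.
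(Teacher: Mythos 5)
The paper offers no proof of this theorem at all: it is stated as known, with the verification deferred to \cite{FMTV,TV3,TV2}, which is exactly where your proposal also ends up, so your treatment is consistent with the paper's. Your computational outline is the standard one and essentially sound — the only loose spot is the claim that the cross terms in $[D_{\lambda_a},D_{\lambda_b}]$ vanish because $C_{ab}=e_{ab}e_{ba}-e_{aa}$ "commutes with all $e_{cc}$": it commutes with the diagonal Cartan elements, but against the weighted sums $\sum_i z_i(e_{cc})_{(i)}$ the individual brackets do not vanish, and one must pair the two cross-term families using $C_{ab}=C_{ba}$ and $\big[(e_{aa}+e_{bb})_{(i)},C_{ab}\big]=0$ (similarly, in the mixed bracket the residual sums $\sum_{j\neq i}[\Omega_{(ij)},(e_{aa})_{(i)}]$ and $\sum_{c\neq a}[(e_{aa})_{(i)},C_{ac}]$ only cancel against each other) — details that are indeed carried out in \cite{FMTV,TV2}.
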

\begin{Theorem}The operators $\widehat{\nabla}_{z_{1}},\dots ,\widehat{\nabla}_{z_{n}}$, $Q_{\lambda_{1}},\dots ,Q_{\lambda_{k}}$ pairwise commute.
\end{Theorem}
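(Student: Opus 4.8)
The statement asserts commutativity within three families, and the plan is to treat them separately:
\[
[\widehat\nabla_{z_i},\widehat\nabla_{z_j}]=0,\qquad [Q_{\lambda_a},Q_{\lambda_b}]=0,\qquad [\widehat\nabla_{z_i},Q_{\lambda_a}]=0 .
\]
For the first family, the operators $\widehat\nabla_{z_1},\dots,\widehat\nabla_{z_n}$ form the trigonometric Knizhnik--Zamolodchikov connection and $[\widehat\nabla_{z_i},\widehat\nabla_{z_j}]=0$ is its flatness. Expanding the bracket, the dependence on $\lambda$ enters only through the term $-\sum_a\lambda_a(e_{aa})_{(i)}$, and the part of $[\widehat\nabla_{z_i},\widehat\nabla_{z_j}]$ that is linear in $\lambda$ reduces to $-\sum_a\lambda_a\big[(e_{aa})_{(i)}+(e_{aa})_{(j)},\,\bar r_{ij}\big]$ with $\bar r_{ij}=\big(z_i\Omega^+_{(ij)}+z_j\Omega^-_{(ij)}\big)/(z_i-z_j)$; this vanishes because $[(e_{aa})_{(i)}+(e_{aa})_{(j)},\Omega^\pm_{(ij)}]=0$. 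The remaining, $\lambda$-independent part is the standard flatness verification for the trigonometric KZ connection, which rests on $\bar r_{ji}=-\bar r_{ij}$, the $\mathfrak{gl}_k$-invariance of $\Omega_{(ij)}$, and the classical Yang--Baxter relation for $\bar r_{ij}$ --- all direct consequences of $[e_{ab},e_{cd}]=\delta_{bc}e_{ad}-\delta_{ad}e_{cb}$.

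For the second family I would use that $T_{\lambda_a}F(\lambda)=F(\lambda+\kappa\mathbf e_a)T_{\lambda_a}$ for any operator-valued function $F$, with $\mathbf e_a$ the $a$-th coordinate vector, together with $T_{\lambda_a}T_{\lambda_b}=T_{\lambda_b}T_{\lambda_a}$; then $[Q_{\lambda_a},Q_{\lambda_b}]=0$ becomes the identity
\[
X_a(z;\lambda)\,X_b(z;\lambda+\kappa\mathbf e_a)=X_b(z;\lambda)\,X_a(z;\lambda+\kappa\mathbf e_b)
\]
in $\operatorname{End}(W_1\otimes\cdots\otimes W_n)$. I would prove this by substituting the definition of $X_a$ and invoking the defining properties of the series $B_{ab}(t)$: it commutes with the Cartan subalgebra $\mathfrak h$, obeys an inversion relation linking $B_{ab}(t)$ to $B_{ba}(-t)$, and satisfies a Yang--Baxter-type (hexagon) relation among $B_{ab},B_{bc},B_{ca}$ with suitably shifted arguments; one also uses that $\prod_i(z_i^{-e_{cc}})_{(i)}$ commutes with $B_{de}(t)$ whenever $c\notin\{d,e\}$. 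These inputs, established in \cite{TV3,TV2}, reduce the displayed identity to relations involving at most three of the indices $1,\dots,k$.

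For the third family, the same device reduces $[\widehat\nabla_{z_i},Q_{\lambda_a}]=0$ to the conjugation identity
\[
X_a(z;\lambda)^{-1}\,\widehat\nabla_{z_i}(z;\lambda)\,X_a(z;\lambda)=\widehat\nabla_{z_i}(z;\lambda+\kappa\mathbf e_a),
\]
and since $\widehat\nabla_{z_i}(z;\lambda+\kappa\mathbf e_a)=\widehat\nabla_{z_i}(z;\lambda)-\kappa(e_{aa})_{(i)}$, what must be shown is that conjugation by $X_a$ subtracts exactly $\kappa(e_{aa})_{(i)}$. The factor $\prod_j(z_j^{-e_{aa}})_{(j)}$ in $X_a$ accounts for the differential part, via $\big(\prod_j(z_j^{-e_{aa}})_{(j)}\big)^{-1}\kappa z_i\,\partial/\partial z_i\,\prod_j(z_j^{-e_{aa}})_{(j)}=\kappa z_i\,\partial/\partial z_i-\kappa(e_{aa})_{(i)}$; it then remains to see that conjugation by $X_a$ leaves the weight term $-\sum_b(\lambda_b-e_{bb}/2)(e_{bb})_{(i)}$ and the $r$-matrix term $-\sum_{j\neq i}\bar r_{ij}$ unchanged. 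Neither is preserved individually --- $\prod_j(z_j^{-e_{aa}})_{(j)}$ does not commute with $\bar r_{ij}$, and the $B_{cd}$-factors of $X_a$ commute with neither the weight term nor $\bar r_{ij}$ nor $(e_{aa})_{(i)}$ --- so the content of the theorem is that these discrepancies cancel. This last cancellation is the step I expect to be the main obstacle: it is not reducible to formal algebra, and uses the precise form of $B_{ab}(t)$ --- its intertwining property with respect to the $\mathfrak{sl}_2$-triple $\{e_{ab},e_{ba},e_{aa}-e_{bb}\}$ and its explicit rational dependence on $t$ --- together with the precise form of the trigonometric $r$-matrix $\bar r_{ij}$; this is the technical heart of the compatibility result in \cite{TV3}, whereas the first two families follow from purely formal identities in $U(\mathfrak{gl}_k)^{\otimes n}$ and in the algebra generated by the series $B_{ab}$.
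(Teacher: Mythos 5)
The paper does not prove this theorem at all: it is stated as a known result, with the proof delegated to the cited literature (the compatibility of the trigonometric KZ operators with the rational qDD operators is the main theorem of the reference [TV3], with [FMTV, TV2] covering the other two families). So there is no in-paper argument to match your proposal against; the relevant comparison is with [TV3], and your outline does follow the general strategy used there (reduce $[Q_{\lambda_a},Q_{\lambda_b}]=0$ and $[\widehat\nabla_{z_i},Q_{\lambda_a}]=0$ to shift/conjugation identities for the operators $X_a$, and use the structural properties of the series $B_{ab}$).

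Judged as a proof, however, your text is an outline with a genuine gap exactly where you say the difficulty lies. The reduction of $[\widehat\nabla_{z_i},Q_{\lambda_a}]=0$ to $X_a(z;\lambda)^{-1}\widehat\nabla_{z_i}(z;\lambda)X_a(z;\lambda)=\widehat\nabla_{z_i}(z;\lambda)-\kappa(e_{aa})_{(i)}$ is correct, but this conjugation identity \emph{is} the content of the theorem, and you do not establish it; likewise the inversion and hexagon-type relations for $B_{ab}$ that you invoke for $[Q_{\lambda_a},Q_{\lambda_b}]=0$ are asserted by citation rather than derived. One bookkeeping point is also stated too loosely: since the $B$-factors of $X_a$ depend only on $\lambda$, conjugating $\kappa z_i\partial/\partial z_i$ by the whole of $X_a$ gives $\kappa z_i\partial/\partial z_i-\kappa\,B^{-1}(e_{aa})_{(i)}B$, where $B=B_{1a}(\lambda_{1a}-\kappa)\cdots B_{a-1,a}(\lambda_{a-1,a}-\kappa)$ acts diagonally and does not commute with $(e_{aa})_{(i)}$; so even the claim that the factor $\prod_j\big(z_j^{-e_{aa}}\big)_{(j)}$ ``accounts for the differential part'' already requires part of the nontrivial cancellation you defer. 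In short: your decomposition and reductions are sound and consistent with how the result is actually proved in [TV3] (which is all the paper itself relies on), but as a blind proof the key analytic/algebraic step is missing rather than proved.
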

\begin{Theorem}The operators $\widehat{D}_{\lambda_{1}},\dots ,\widehat{D}_{\lambda_{k}}$, $Z_{z_{1}},\dots ,Z_{z_{n}}$ pairwise commute.
\end{Theorem}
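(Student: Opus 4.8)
The plan is to establish pairwise commutativity in three separate groups: $[\widehat{D}_{\lambda_a},\widehat{D}_{\lambda_b}]=0$, $[Z_{z_i},Z_{z_j}]=0$, and $[\widehat{D}_{\lambda_a},Z_{z_i}]=0$ (all three relations are contained in \cite{FMTV,TV2,TV3}; below I outline the arguments). For $[Z_{z_i},Z_{z_j}]=0$ I would write $Z_{z_i}=K_i(z;\lambda)T_{z_i}$, push all shift operators to the right, and use $T_{z_i}T_{z_j}=T_{z_j}T_{z_i}$ to reduce the claim to the consistency identity $K_i(z;\lambda)K_j(z';\lambda)=K_j(z;\lambda)K_i(z'';\lambda)$, where $z'$ and $z''$ are obtained from $z$ by the shifts $z_i\mapsto z_i+\kappa$ and $z_j\mapsto z_j+\kappa$, respectively. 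This is the usual qKZ consistency statement; it is proved by iterating the Yang--Baxter equation for the rational $R$-matrix together with its inversion relation $R_{ij}(t)R_{ji}(-t)=1$ and the $\mathfrak{gl}_{k}$-invariance \eqref{inv}, the latter letting the diagonal factors $\big(\lambda_a^{-e_{aa}}\big)_{(i)}$ move past the $R$-matrices, so that both sides are brought to the same normally ordered product.

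For $[\widehat{D}_{\lambda_a},\widehat{D}_{\lambda_b}]=0$ with $a\neq b$, write $\widehat{D}_{\lambda_a}=\kappa\lambda_a\partial_{\lambda_a}+M_a$ with $M_a$ the matrix-valued multiplication part. The operators $\lambda_a\partial_{\lambda_a}$ and $\lambda_b\partial_{\lambda_b}$ commute, and the first-order contribution $\kappa\big(\lambda_a(\partial_{\lambda_a}M_b)-\lambda_b(\partial_{\lambda_b}M_a)\big)$ vanishes identically once one uses $[e_{ba},e_{ab}]=e_{bb}-e_{aa}$; hence the bracket reduces to the matrix commutator $[M_a,M_b]$. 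It then remains to verify the identity $[M_a,M_b]=0$ in $U(\mathfrak{gl}_{k})^{\otimes n}$ — the pairwise commutativity of the trigonometric dynamical Hamiltonians — which is a finite computation with the $\mathfrak{gl}_{k}$ relations and can also be quoted from \cite{FMTV,TV2,TV3}.

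The cross relation $[\widehat{D}_{\lambda_a},Z_{z_i}]=0$ is the core of the statement. Write $K_i=L_i^{-1}\Lambda_i\mathcal{R}_i$ with $L_i=R_{in}(z_{in})\cdots R_{i,i+1}(z_{i,i+1})$, $\Lambda_i=\prod_{b}\big(\lambda_b^{-e_{bb}}\big)_{(i)}$, and $\mathcal{R}_i=R_{1i}(z_{1i}-\kappa)\cdots R_{i-1,i}(z_{i-1,i}-\kappa)$. Since the only $z$-dependence of $M_a$ is the term $-\sum_{j}z_j(e_{aa})_{(j)}$, a direct manipulation gives
\begin{gather*}
[\widehat{D}_{\lambda_a},Z_{z_i}]=\big(\kappa\lambda_a\partial_{\lambda_a}K_i+[M_a,K_i]+\kappa K_i(e_{aa})_{(i)}\big)T_{z_i},
\end{gather*}
and, because $\kappa\lambda_a\partial_{\lambda_a}$ acts on $\Lambda_i$ as multiplication by $-\kappa(e_{aa})_{(i)}$, the vanishing of this bracket is equivalent to the operator identity
\begin{gather*}
[M_a,K_i]=\kappa\,L_i^{-1}\Lambda_i\big[(e_{aa})_{(i)},\mathcal{R}_i\big].
\end{gather*}
I would prove this by moving the summands of $M_a$ — the term $-\sum_{b}\sum_{j<l}(e_{ab})_{(j)}(e_{ba})_{(l)}$ together with $\tfrac12 e_{aa}^2-\sum_j z_j(e_{aa})_{(j)}-\sum_{b\neq a}\tfrac{\lambda_b}{\lambda_{ab}}(e_{ab}e_{ba}-e_{aa})$ — through the $R$-matrices in $L_i$ and $\mathcal{R}_i$ one factor at a time, each step controlled by the commutation relation \eqref{comrel} and the invariance \eqref{inv}, so that the accumulated corrections telescope to the required right-hand side.

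The hard part will be exactly this last relation, and within it the bookkeeping of dragging the quadratic terms $\sum_b(e_{ab})_{(j)}(e_{ba})_{(l)}$ across a product of several $R$-matrices by repeated use of \eqref{comrel}, keeping track of the spectral parameters $z_{jl}-\kappa$ and of the weight shifts induced by the diagonal factor $\Lambda_i$, and then checking that the resulting telescoping sum matches the $\kappa$-shift coming from the term $-\sum_j z_j(e_{aa})_{(j)}$ of $M_a$. Once this is organized the computation is mechanical; by contrast $[\widehat{D},\widehat{D}]=0$ is a routine Lie-algebraic identity, and $[Z,Z]=0$ is the classical qKZ consistency argument.
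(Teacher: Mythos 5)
Note first that the paper does not prove this theorem: all three commutativity statements in Section~\ref{section3} are quoted from the literature, the relevant reference for this one (trigonometric DD operators versus rational qKZ operators) being \cite{TV2}. So to the extent that you appeal to \cite{FMTV,TV3,TV2}, you are doing exactly what the paper does, and your preliminary reductions are correct: the first-order terms in $[\widehat{D}_{\lambda_a},\widehat{D}_{\lambda_b}]$ do cancel via $[e_{ba},e_{ab}]=e_{bb}-e_{aa}$, leaving the known identity $[M_a,M_b]=0$; the qKZ consistency identity $K_i(z)K_j(z+\kappa e_i)=K_j(z)K_i(z+\kappa e_j)$ is the right reformulation of $[Z_{z_i},Z_{z_j}]=0$; and your formula $[\widehat{D}_{\lambda_a},Z_{z_i}]=\big(\kappa\lambda_a\partial_{\lambda_a}K_i+[M_a,K_i]+\kappa K_i(e_{aa})_{(i)}\big)T_{z_i}$, together with the equivalent identity $[M_a,K_i]=\kappa L_i^{-1}\Lambda_i\big[(e_{aa})_{(i)},\mathcal{R}_i\big]$, checks out.

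If, however, the intent is a self-contained proof, there is a genuine gap precisely where you place ``the hard part''. The identity $[M_a,K_i]=\kappa L_i^{-1}\Lambda_i\big[(e_{aa})_{(i)},\mathcal{R}_i\big]$ is in substance the theorem itself (it is the main result of \cite{TV2}), and the plan to obtain it by dragging the summands of $M_a$ through the $R$-matrices ``one factor at a time, controlled by \eqref{comrel} and \eqref{inv}'' is not justified. Relation \eqref{comrel} governs only the commutation of a single $R$-matrix with the particular combination $te_{ab}\otimes 1+\sum_c e_{ac}\otimes e_{cb}$ in its own two tensor slots, whereas $M_a$ contains the quadratic terms $(e_{ab})_{(j)}(e_{ba})_{(l)}$ for \emph{all} pairs $j<l$, most of which are not of this form relative to any single factor $R_{ji}(z_{ji}-\kappa)$ or $R_{ij}(z_{ij})$ occurring in $K_i$; moreover the diagonal factor $\Lambda_i$ commutes neither with the individual $R$-matrices nor with the diagonally embedded terms $\frac{\lambda_b}{\lambda_{ab}}(e_{ab}e_{ba}-e_{aa})$, and it is exactly the interplay of these non-commutators with the $\kappa$-shift produced by $-\sum_j z_j(e_{aa})_{(j)}$ that must be shown to ``telescope''. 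Asserting that the corrections telescope is assuming the conclusion; nothing in your outline shows it, and the published proof of this compatibility is itself a nontrivial piece of work rather than a mechanical bookkeeping exercise. So either cite \cite{TV2} outright for the cross relation (which matches the paper's treatment), or supply the actual computation/argument establishing the displayed identity.
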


\section[The $(\mathfrak{gl}_{k},\mathfrak{gl}_{n})$-duality]{The $\boldsymbol{(\mathfrak{gl}_{k},\mathfrak{gl}_{n})}$-duality}\label{section4}

Consider the ring $\mathfrak{P}_{kn}$ of polynomials in $kn$ anticommuting variables $x_{ai}$, $a=1,\dots ,k$, $i=1,\dots ,n$.
As a vector space, $\mathfrak{P}_{kn}$ is isomorphic to $(\mathfrak{X}_{k})^{\otimes n}$, the isomorphism $\phi_{1} \colon (\mathfrak{X}_{k})^{\otimes n}\rightarrow \mathfrak{P}_{kn}$ being given by
\begin{gather}\label{firstI}
\phi_{1} (p_{1} \!\otimes \dots\otimes p_{n})(x_{11},\dots ,x_{kn})=p_{1}(x_{11},\dots ,x_{k1})p_{2}(x_{12},\dots ,x_{k2})\cdots p_{n}(x_{1n},\dots ,x_{kn}),\!\!\!
\end{gather}
and to $(\mathfrak{X}_{n})^{\otimes k}$, the isomorphism $\phi_{2} :(\mathfrak{X}_{n})^{\otimes k}\rightarrow \mathfrak{P}_{kn}$ being given by
\begin{gather}\label{secondI}
\phi_{2} (p_{1}\!\otimes \dots\otimes p_{k})(x_{11},\dots ,x_{kn})=p_{1}(x_{11},\dots ,x_{1n})p_{2}(x_{21},\dots ,x_{2n})\cdots p_{k}(x_{k1},\dots ,x_{kn}).\!\!\!
\end{gather}

We transfer the $\mathfrak{gl}_{k}$-action on $(\mathfrak{X}_{k})^{\otimes n}$ to $\mathfrak{P}_{kn}$ using isomorphism $\phi_{1}$. Similarly, we transfer the $\mathfrak{gl}_{n}$-action on $(\mathfrak{X}_{n})^{\otimes k}$ to $\mathfrak{P}_{kn}$ using isomorphism $\phi_{2}$.

We will write superscripts $\langle k\rangle$ and $\langle n\rangle$ to distinguish objects associated with Lie algebras $\mathfrak{gl}_{k}$ and $\mathfrak{gl}_{n}$, respectively. For example, $e_{ab}^{\langle k\rangle}$, $a,b=1,\dots,k$ denote the generators of $\mathfrak{gl}_{k}$, and $e_{ij}^{\langle n\rangle}$, $i,j=1,\dots,n$ denote the generators of $\mathfrak{gl}_{n}$.
Then $\mathfrak{P}_{kn}$ is isomorphic to $\big(V_{\bullet}^{\langle k\rangle}\big)^{\otimes n}$ as a $\mathfrak{gl}_{k}$-module by~\eqref{firstI}, and it is isomorphic to $\big(V_{\bullet}^{\langle n\rangle}\big)^{\otimes k}$ as a $\mathfrak{gl}_{n}$-module by~\eqref{secondI}.

It is easy to check that $\mathfrak{gl}_{k}$- and $\mathfrak{gl}_{n}$-actions on $\mathfrak{P}_{kn}$ commute, therefore $\mathfrak{P}_{kn}$ is a $\mathfrak{gl}_{k}\oplus\mathfrak{gl}_{n}$-module. We have the following theorem, see for example \cite{CW}:
\begin{Theorem}For any partition $\boldsymbol{l}$, denote its transpose by $\boldsymbol{l}'$. The $\mathfrak{gl}_{k}\oplus \mathfrak{gl}_{n}$-module $\mathfrak{P}_{kn}$ has the decomposition:
\begin{gather*}
\mathfrak{P}_{kn}=\bigoplus_{\substack{\boldsymbol{l}=(l_{1},\dots,l_{k}),\\ l_{1}\leq n.}}V^{\langle k\rangle}_{\boldsymbol{l}}\otimes V^{\langle n\rangle}_{\boldsymbol{l}'}.
\end{gather*}
\end{Theorem}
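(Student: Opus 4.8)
The plan is to deduce the decomposition from a character identity rather than from a double-commutant argument. Since $\mathfrak{gl}_{k}\oplus\mathfrak{gl}_{n}$ is reductive and $\mathfrak{P}_{kn}$ is finite dimensional, with the central elements $\sum_{a=1}^{k}e^{\langle k\rangle}_{aa}$ and $\sum_{i=1}^{n}e^{\langle n\rangle}_{ii}$ both acting as the (diagonalizable) operator counting the number of variables in a monomial, the module $\mathfrak{P}_{kn}$ is completely reducible, an irreducible summand is an outer tensor product, and all weights that occur have nonnegative coordinates, so only partitions appear. Thus
\begin{gather*}
\mathfrak{P}_{kn}=\bigoplus_{\mu,\,\nu}m_{\mu\nu}\,V^{\langle k\rangle}_{\mu}\otimes V^{\langle n\rangle}_{\nu},\qquad m_{\mu\nu}\in\mathbb{Z}_{\geq0},
\end{gather*}
the sum over partitions $\mu$ with at most $k$ parts and $\nu$ with at most $n$ parts, and it remains to pin down the $m_{\mu\nu}$.

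Next I would compute the bigraded character. Using the identifications $\phi_{1}$ and $\phi_{2}$, the generator $e^{\langle k\rangle}_{aa}$ acts on a squarefree monomial $\prod_{(b,i)\in S}x_{bi}$ by counting the pairs $(b,i)\in S$ with $b=a$, and $e^{\langle n\rangle}_{ii}$ by counting those with second index equal to $i$; hence every such monomial is a joint weight vector, and, writing $u_{a}$ and $v_{i}$ for the exponentials of the corresponding fundamental weights,
\begin{gather*}
\chi_{\mathfrak{P}_{kn}}=\prod_{a=1}^{k}\prod_{i=1}^{n}(1+u_{a}v_{i}).
\end{gather*}
Now I would invoke the dual Cauchy identity
\begin{gather*}
\prod_{a=1}^{k}\prod_{i=1}^{n}(1+u_{a}v_{i})=\sum_{\boldsymbol{l}}s_{\boldsymbol{l}}(u_{1},\dots,u_{k})\,s_{\boldsymbol{l}'}(v_{1},\dots,v_{n}),
\end{gather*}
the sum over partitions $\boldsymbol{l}$ fitting inside the $k\times n$ rectangle, i.e., with at most $k$ parts and $l_{1}\leq n$ (this follows from the ordinary Cauchy identity by applying the involution $\omega$, which sends $h_{r}\mapsto e_{r}$ and hence $s_{\lambda}\mapsto s_{\lambda'}$, in the $v$-variables, or from a Robinson--Schensted--Knuth type bijection on matrices with entries $0$ and $1$). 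Since $s_{\boldsymbol{l}}(u_{1},\dots,u_{k})=\chi_{V^{\langle k\rangle}_{\boldsymbol{l}}}$ and $s_{\boldsymbol{l}'}(v_{1},\dots,v_{n})=\chi_{V^{\langle n\rangle}_{\boldsymbol{l}'}}$, and since the products $\chi_{V^{\langle k\rangle}_{\mu}}\chi_{V^{\langle n\rangle}_{\nu}}$ are linearly independent (each factor is a Schur polynomial, and Schur polynomials in $k$, resp.\ $n$, variables form a basis of the symmetric polynomials), comparing the two expressions for $\chi_{\mathfrak{P}_{kn}}$ forces $m_{\mu\nu}=1$ when $(\mu,\nu)=(\boldsymbol{l},\boldsymbol{l}')$ for some partition $\boldsymbol{l}$ in the $k\times n$ box, and $m_{\mu\nu}=0$ otherwise. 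This is exactly the claimed decomposition.

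The only input here that is not purely formal is the dual Cauchy identity, a standard result of symmetric function theory, so along this route there is essentially no hard step; everything else is bookkeeping of which Schur polynomials vanish. If one preferred to avoid Cauchy's identity, an alternative would be to use \eqref{dec1} to obtain the $\mathfrak{gl}_{k}$-isomorphism $\mathfrak{P}_{kn}\cong\bigoplus_{0\leq l_{1},\dots,l_{n}\leq k}V^{\langle k\rangle}_{l_{1}}\otimes\dots\otimes V^{\langle k\rangle}_{l_{n}}$, iterate the dual Pieri rule (tensoring with $V^{\langle k\rangle}_{l}=\bigwedge^{l}\mathbb{C}^{k}$ adjoins a vertical strip of $l$ boxes) to see that the multiplicity of $V^{\langle k\rangle}_{\boldsymbol{l}}$ equals the number of chains of such strips building $\boldsymbol{l}$, recognize this count as the number of semistandard tableaux of shape $\boldsymbol{l}'$ with entries in $\{1,\dots,n\}$, hence as $\dim V^{\langle n\rangle}_{\boldsymbol{l}'}$, and finally use the commuting $\mathfrak{gl}_{n}$-action together with a highest-weight-vector check to upgrade the multiplicity space to $V^{\langle n\rangle}_{\boldsymbol{l}'}$ rather than some other module of the same dimension. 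In that approach I would expect this last identification step to be the one requiring the most care; the character argument sketched above is preferable precisely because it bypasses it.
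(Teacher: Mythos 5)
Your argument is correct and complete, but it is worth noting that the paper does not actually prove this statement at all: it is quoted as a known result with a reference to \cite{CW}, where it appears as skew $(\mathfrak{gl}_{k},\mathfrak{gl}_{n})$ Howe duality (proved there within the general framework of dual pairs, with explicit joint highest weight vectors). Your route is the standard self-contained alternative: complete reducibility of $\mathfrak{P}_{kn}$ as a $\mathfrak{gl}_{k}\oplus\mathfrak{gl}_{n}$-module (finite dimensionality plus semisimple action of the two central elements), the observation that every irreducible constituent is an outer tensor product $V^{\langle k\rangle}_{\mu}\otimes V^{\langle n\rangle}_{\nu}$ with $\mu$, $\nu$ partitions, the computation of the bigraded character as $\prod_{a,i}(1+u_{a}v_{i})$, and then the dual Cauchy identity together with the linear independence of the products $s_{\mu}(u)s_{\nu}(v)$ to pin down the multiplicities $m_{\mu\nu}=\delta_{\nu,\mu'}$ for $\mu$ in the $k\times n$ box. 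All the steps you cite are sound, and the character argument does correctly determine the module up to isomorphism once complete reducibility is in hand, which you do establish. What your proof buys is independence from the machinery of \cite{CW} at the modest cost of importing the dual Cauchy identity; what the cited approach buys is explicit joint highest weight vectors (analogues of the vectors $v_{m}$ of the paper's formula \eqref{Vm} in the $k=2$ case), which the character method does not produce and which are the kind of data the rest of the paper actually uses. Your alternative sketch via the dual Pieri rule is also viable, and you correctly identify the upgrade of the multiplicity space to $V^{\langle n\rangle}_{\boldsymbol{l}'}$ as the step needing care there.
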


Fix vectors $\boldsymbol{l}=(l_{1},\dots ,l_{n})\in\mathbb{Z}_{\geq 0}^{n}$ and $\boldsymbol{m}=(m_{1},\dots ,m_{k})\in\mathbb{Z}_{\geq 0}^{k}$ such that $\sum\limits_{i=1}^{n}l_{i}=\sum\limits_{a=1}^{k}m_{a}$. Let
\begin{gather*}
\mathcal{Z}_{kn}[\boldsymbol{l},\boldsymbol{m}]=\left\{(d_{ai})_{\begin{subarray}{1}a=1,\dots ,k\\i=1,\dots ,n\end{subarray}}\in\{0,1\}^{kn}\,\bigg|\,\sum_{a=1}^{k}d_{ai}=l_{i},\, \sum_{i=1}^{n}d_{ai}=m_{a}\right\}.
\end{gather*}
Denote by $\mathfrak{P}_{kn}[\boldsymbol{l},\boldsymbol{m}]\subset\mathfrak{P}_{kn}$ the span of all monomials $x^{\boldsymbol{d}}=x_{11}^{d_{11}}\cdots x_{k1}^{d_{k1}}\cdots x_{1n}^{d_{1n}}\cdots x_{kn}^{d_{kn}}$ such that $\boldsymbol{d}=(d_{ai})\in \mathcal{Z}_{kn}[\boldsymbol{l},\boldsymbol{m}]$. Then by~\eqref{dec1}, the maps $\phi_{1}$ and $\phi_{2}$ induce the isomorpfisms of the respective weight subspaces $\big(V_{l_{1}}^{\langle k\rangle}\otimes\cdots\otimes V_{l_{n}}^{\langle k\rangle}\big)[m_{1},\dots ,m_{k}]$ and $\big(V_{m_{1}}^{\langle n\rangle}\otimes\cdots\otimes V_{m_{k}}^{\langle n\rangle}\big)[l_{1},\dots ,l_{n}]$ with the space $\mathfrak{P}_{kn}[\boldsymbol{l},\boldsymbol{m}]$.

There is another description of the isomorphisms~$\phi_{1}$ and~$\phi_{2}$.

For any $\boldsymbol{a}=(a_{1},\dots ,a_{r})$, $\boldsymbol{i}=(i_{1},\dots ,i_{s})$, such that $1\leq a_{1}<\dots <a_{r}\leq k$, $1\leq i_{1}<\dots <i_{s}\leq n$, define: $e_{\boldsymbol{a}}^{\langle k\rangle}=e^{\langle k\rangle}_{a_{1}1}e^{\langle k\rangle}_{a_{2}2}\cdots e^{\langle k\rangle}_{a_{r}r}$, $e_{\boldsymbol{i}}^{\langle n\rangle}=e^{\langle n\rangle}_{i_{1}1}e^{\langle n\rangle}_{i_{2}2}\cdots e^{\langle n\rangle}_{i_{s}s}$.

Fix $\boldsymbol{d}=(d_{ai})\in\mathcal{Z}_{kn}[\boldsymbol{l},\boldsymbol{m}]$. Let $\boldsymbol{a}_{j}=\big(a_{1}^{j},\dots ,a_{l_{j}}^{j}\big)$ be such that $a_{1}^{j}<a_{2}^{j}<\dots < a_{l_{j}}^{j}$ and $d_{a_{p}^{j},j}=1$ for all $j=1,\dots , n$, $p=1,\dots ,l_{j}$. Similarly, let $\boldsymbol{i}_{s}=\big(i_{1}^{s},\dots ,i_{m_{l}}^{s}\big)$ be such that $i_{1}^{s}<i_{2}^{s}<\dots < i_{m_{s}}^{s}$ and $d_{s,i_{p}^{s}}=1$ for all $s=1,\dots, k$, $p=1,\dots ,m_{s}$. Introduce the following vectors:
\begin{gather*}
v_{\boldsymbol{d}}^{\langle k\rangle}= e^{\langle k\rangle}_{\boldsymbol{a}_{1}}v_{l_{1}}^{\langle k\rangle}\otimes\dots\otimes e^{\langle k\rangle}_{\boldsymbol{a}_n}v_{l_{n}}^{\langle k\rangle},
\qquad
v_{\boldsymbol{d}}^{\langle n\rangle}=e^{\langle n\rangle}_{\boldsymbol{i}_{1}}v_{m_{1}}^{\langle n\rangle}\otimes\dots\otimes e^{\langle n\rangle}_{\boldsymbol{i}_{k}}v_{m_{k}}^{\langle n\rangle},
\end{gather*}
where $v_{l_{j}}^{\langle k\rangle}=x_{1}x_{2}\cdots x_{l_{j}}$ and $v_{m_{s}}^{\langle n\rangle}=x_{1}x_{2}\cdots x_{m_{s}}$ are highest weight vectors for the modules $V_{l_{j}}^{\langle k\rangle}$ and $V_{m_{l}}^{\langle n\rangle}$, respectively.

\begin{Lemma}The vectors $v_{\boldsymbol{d}}^{\langle k\rangle}$, $\boldsymbol{d}\in\mathcal{Z}_{kn}[\boldsymbol{l},\boldsymbol{m}]$ form a basis of the weight subspace $\big(V_{l_{1}}^{\langle k\rangle}\otimes\dots\otimes V_{l_{n}}^{\langle k\rangle}\big)[m_{1},\dots ,m_{k}]$. Similarly, the vectors $v_{\boldsymbol{d}}^{\langle n\rangle}$, $\boldsymbol{d}\in\mathcal{Z}_{kn}[\boldsymbol{l},\boldsymbol{m}]$ form a basis of the weight subspace $\big(V_{m_{1}}^{\langle n\rangle}\otimes\cdots\otimes V_{m_{k}}^{\langle n\rangle}\big)[l_{1},\dots ,l_{n}]$.
\end{Lemma}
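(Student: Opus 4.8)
The plan is to reduce the lemma to computing the images of the vectors $v_{\boldsymbol{d}}^{\langle k\rangle}$ and $v_{\boldsymbol{d}}^{\langle n\rangle}$ under the isomorphisms $\phi_{1}$ and $\phi_{2}$. Recall that it was observed above that $\phi_{1}$ restricts to an isomorphism of $\big(V_{l_{1}}^{\langle k\rangle}\otimes\cdots\otimes V_{l_{n}}^{\langle k\rangle}\big)[m_{1},\dots ,m_{k}]$ onto $\mathfrak{P}_{kn}[\boldsymbol{l},\boldsymbol{m}]$, and that $\mathfrak{P}_{kn}[\boldsymbol{l},\boldsymbol{m}]$ has the monomials $x^{\boldsymbol{d}}$, $\boldsymbol{d}\in\mathcal{Z}_{kn}[\boldsymbol{l},\boldsymbol{m}]$, as a basis by its very definition. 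Hence it suffices to show that $\phi_{1}\big(v_{\boldsymbol{d}}^{\langle k\rangle}\big)$ is a nonzero scalar multiple of $x^{\boldsymbol{d}}$ for every $\boldsymbol{d}\in\mathcal{Z}_{kn}[\boldsymbol{l},\boldsymbol{m}]$: indeed, then $\phi_{1}$ takes the set $\big\{v_{\boldsymbol{d}}^{\langle k\rangle}\big\}$ bijectively to a basis of $\mathfrak{P}_{kn}[\boldsymbol{l},\boldsymbol{m}]$, so $\big\{v_{\boldsymbol{d}}^{\langle k\rangle}\big\}$ is a basis of the weight subspace.

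Since $\phi_{1}$ multiplies the entries of a pure tensor, see~\eqref{firstI}, this in turn reduces to the following identity inside a single copy of $\mathfrak{X}_{k}$ with anticommuting variables $x_{1},\dots ,x_{k}$: if $1\le a_{1}<a_{2}<\dots <a_{l}\le k$, then
\begin{gather*}
e^{\langle k\rangle}_{a_{1}1}e^{\langle k\rangle}_{a_{2}2}\cdots e^{\langle k\rangle}_{a_{l}l}(x_{1}x_{2}\cdots x_{l})=\pm\,x_{a_{1}}x_{a_{2}}\cdots x_{a_{l}},
\end{gather*}
which in particular is nonzero. I would prove this by induction on $l$, the case $l=0$ being trivial. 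Using $e^{\langle k\rangle}_{a_{l}l}=x_{a_{l}}\partial_{l}$ and the defining rule for the left derivation one gets $e^{\langle k\rangle}_{a_{l}l}(x_{1}\cdots x_{l})=(-1)^{l-1}x_{a_{l}}x_{1}\cdots x_{l-1}$; here $a_{l}\ge l$ because $a_{1}<\dots <a_{l}$ are distinct positive integers, so $x_{a_{l}}$ is absent from $x_{1}\cdots x_{l-1}$ and this expression does not vanish. For $p\le l-1$ the operator $e^{\langle k\rangle}_{a_{p}p}=x_{a_{p}}\partial_{p}$ involves only $x_{a_{p}}$ and $\partial_{p}$, and since $a_{p}<a_{l}$ and $p<l\le a_{l}$ neither of these acts on $x_{a_{l}}$; by the super-Leibniz rule for $\partial_{p}$ the operator $e^{\langle k\rangle}_{a_{p}p}$ therefore commutes with left multiplication by $x_{a_{l}}$. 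Applying the induction hypothesis to $e^{\langle k\rangle}_{a_{1}1}\cdots e^{\langle k\rangle}_{a_{l-1},l-1}(x_{1}\cdots x_{l-1})=\pm\,x_{a_{1}}\cdots x_{a_{l-1}}$ and moving $x_{a_{l}}$ back to the last position completes the induction. Consequently $\phi_{1}\big(v_{\boldsymbol{d}}^{\langle k\rangle}\big)=\prod_{j=1}^{n}\big(e^{\langle k\rangle}_{\boldsymbol{a}_{j}}v_{l_{j}}^{\langle k\rangle}\big)(x_{1j},\dots ,x_{kj})=\pm\,x^{\boldsymbol{d}}$, which proves the first assertion of the lemma.

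The second assertion, about the vectors $v_{\boldsymbol{d}}^{\langle n\rangle}$, follows in exactly the same way: one uses $\phi_{2}$ and~\eqref{secondI} in place of $\phi_{1}$ and~\eqref{firstI}, the same single-factor identity inside $\mathfrak{X}_{n}$, and the fact, recorded above, that $\phi_{2}$ restricts to an isomorphism of $\big(V_{m_{1}}^{\langle n\rangle}\otimes\cdots\otimes V_{m_{k}}^{\langle n\rangle}\big)[l_{1},\dots ,l_{n}]$ onto $\mathfrak{P}_{kn}[\boldsymbol{l},\boldsymbol{m}]$. The only point requiring genuine care is the single-factor identity, specifically the bookkeeping of the anticommutation of the variables $x_{a}$ and the derivations $\partial_{p}$ and the non-vanishing, which rests on the inequality $a_{l}\ge l$; everything else is formal.
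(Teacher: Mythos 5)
Your proof is correct and follows essentially the route the paper itself intends: the paper states this lemma without proof, but its very next Lemma ($\phi_{1}\big(v^{\langle k\rangle}_{\boldsymbol{d}}\big)=x^{\boldsymbol{d}}$, $\phi_{2}\big(v^{\langle n\rangle}_{\boldsymbol{d}}\big)=\varepsilon(\boldsymbol{d})x^{\boldsymbol{d}}$) is exactly the key computation you carry out, combined with the previously stated fact that $\phi_{1}$, $\phi_{2}$ restrict to isomorphisms of the weight subspaces onto $\mathfrak{P}_{kn}[\boldsymbol{l},\boldsymbol{m}]$. Your single-factor induction (including the non-vanishing via $a_{l}\geq l$ and the commutation of $e^{\langle k\rangle}_{a_{p}p}$ with multiplication by $x_{a_{l}}$) is sound, and settling only the sign up to $\pm$ is enough for the basis claim, even though the exact signs are as the paper records.
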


Let $\varepsilon(\boldsymbol{d})$ be a sign function such that $x_{11}^{d_{11}}\cdots x_{1n}^{d_{1n}} \cdots x_{k1}^{d_{k1}}\cdots x_{kn}^{d_{kn}}=\varepsilon (\boldsymbol{d})x^{\boldsymbol{d}}$.

\begin{Lemma}We have $\phi_{1}\big(v^{\langle k\rangle}_{\boldsymbol{d}}\big)=x^{\boldsymbol{d}}$ and $\phi_{2}\big(v^{\langle n\rangle}_{\boldsymbol{d}}\big)=\varepsilon (\boldsymbol{d})x^{\boldsymbol{d}}$.
\end{Lemma}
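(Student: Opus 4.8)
The plan is to prove both formulas by direct computation, exploiting the explicit description of $v_{\boldsymbol d}^{\langle k\rangle}$ and $v_{\boldsymbol d}^{\langle n\rangle}$ as products of raising operators applied to the distinguished highest weight vectors, together with the definition \eqref{firstI} of $\phi_1$ and \eqref{secondI} of $\phi_2$ as componentwise multiplication of polynomials in the anticommuting variables $x_{ai}$.

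First I would treat $\phi_1$. Fix $\boldsymbol d\in\mathcal Z_{kn}[\boldsymbol l,\boldsymbol m]$ and recall that in the $j$-th tensor factor $V_{l_j}^{\langle k\rangle}\subset\mathfrak X_k$ the vector $e_{\boldsymbol a_j}^{\langle k\rangle}v_{l_j}^{\langle k\rangle}=e_{a_1^j1}^{\langle k\rangle}\cdots e_{a_{l_j}^j l_j}^{\langle k\rangle}(x_1x_2\cdots x_{l_j})$. Since $\mathfrak{gl}_k$ acts on $\mathfrak X_k$ by $e_{ab}^{\langle k\rangle}\cdot p=x_a\partial_b p$, the operator $e_{a_p^j p}^{\langle k\rangle}$ replaces the letter $x_p$ by $x_{a_p^j}$; applying these in the order $p=1,2,\dots$ turns $x_1x_2\cdots x_{l_j}$ into $x_{a_1^j}x_{a_2^j}\cdots x_{a_{l_j}^j}$, with no sign ambiguity because each substitution acts on the leftmost remaining "original" letter and the $a_p^j$ are strictly increasing, so at each stage the newly produced letter is smaller than all letters still to be rewritten — hence no transpositions are needed. (One should check the $\partial_b$'s do not annihilate anything prematurely, which holds precisely because $d_{a_p^j,j}=1$ exactly for $p=1,\dots,l_j$.) Under the identification of the $j$-th tensor factor with the variables $x_{1j},\dots,x_{kj}$, this says the $j$-th polynomial is $x_{a_1^j j}x_{a_2^j j}\cdots x_{a_{l_j}^j j}$, i.e.\ the product $\prod_{a:\,d_{aj}=1}x_{aj}$ taken in increasing order of $a$. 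Multiplying these over $j=1,\dots,n$ as in \eqref{firstI} gives exactly $x_{11}^{d_{11}}\cdots x_{k1}^{d_{k1}}\cdots x_{1n}^{d_{1n}}\cdots x_{kn}^{d_{kn}}=x^{\boldsymbol d}$, proving $\phi_1(v_{\boldsymbol d}^{\langle k\rangle})=x^{\boldsymbol d}$.

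The computation for $\phi_2$ is formally identical after interchanging the roles of $k$ and $n$: in the $s$-th tensor factor $V_{m_s}^{\langle n\rangle}\subset\mathfrak X_n$ the vector $e_{\boldsymbol i_s}^{\langle n\rangle}v_{m_s}^{\langle n\rangle}$ equals $x_{i_1^s}x_{i_2^s}\cdots x_{i_{m_s}^s}$ by the same substitution argument, and under $\phi_2$ the $s$-th factor becomes $x_{s i_1^s}x_{s i_2^s}\cdots x_{s i_{m_s}^s}=\prod_{i:\,d_{si}=1}x_{si}$ taken in increasing order of $i$. The product over $s=1,\dots,k$ as in \eqref{secondI} therefore yields $x_{11}^{d_{11}}\cdots x_{1n}^{d_{1n}}\cdots x_{k1}^{d_{k1}}\cdots x_{kn}^{d_{kn}}$, which is the monomial with the variables ordered lexicographically by $(a,i)$ with $a$ varying slowest — precisely the ordering appearing in the definition of $\varepsilon(\boldsymbol d)$. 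Hence $\phi_2(v_{\boldsymbol d}^{\langle n\rangle})=\varepsilon(\boldsymbol d)\,x^{\boldsymbol d}$.

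The only genuine point requiring care — the ``main obstacle'', though a mild one — is the bookkeeping of signs: one must verify that applying the string $e_{a_1^j1}^{\langle k\rangle}\cdots e_{a_{l_j}^j l_j}^{\langle k\rangle}$ to $x_1\cdots x_{l_j}$ produces $x_{a_1^j}\cdots x_{a_{l_j}^j}$ \emph{with coefficient $+1$}, and that the discrepancy between $\phi_1$ and $\phi_2$ is exactly the sign $\varepsilon(\boldsymbol d)$ relating the two natural orderings of the letters of $x^{\boldsymbol d}$. Both follow from the strict monotonicity of the index tuples $\boldsymbol a_j$ and $\boldsymbol i_s$, which prevents any reordering of anticommuting variables within a single tensor factor. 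Once this is settled, the statement is immediate, and the equality of dimensions with $\mathcal Z_{kn}[\boldsymbol l,\boldsymbol m]$ already recorded in the preceding Lemma confirms that the $x^{\boldsymbol d}$ indeed span $\mathfrak P_{kn}[\boldsymbol l,\boldsymbol m]$, so no further nondegeneracy argument is needed.
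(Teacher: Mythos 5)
Your overall plan is the right one (the paper states this lemma without proof, and a direct factor-by-factor computation plus a comparison of the column-major ordering defining $x^{\boldsymbol d}$ with the row-major ordering produced by $\phi_2$ is exactly what is needed), and your final assembly of the tensor factors and the identification of the discrepancy between \eqref{firstI} and \eqref{secondI} with $\varepsilon(\boldsymbol d)$ are correct. However, the single-factor computation, as you describe it, is flawed. The element $e^{\langle k\rangle}_{\boldsymbol a_j}=e^{\langle k\rangle}_{a_1^j1}e^{\langle k\rangle}_{a_2^j2}\cdots e^{\langle k\rangle}_{a_{l_j}^jl_j}$ acts on a vector with the \emph{rightmost} factor first, so the substitutions are performed in the order $p=l_j,l_j-1,\dots,1$, not ``in the order $p=1,2,\dots$'' as you claim. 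The order matters: these operators do not commute, and your order can literally annihilate the vector. For instance, take $l_j=2$, $\boldsymbol a_j=(2,3)$: applying $e_{21}$ first gives $e_{21}(x_1x_2)=x_2\partial_1(x_1x_2)=x_2x_2=0$, whereas the actual product gives $e_{21}e_{32}(x_1x_2)=e_{21}(-x_3x_1)=x_2x_3$, which is the claimed answer. Your parenthetical check that ``the $\partial_b$'s do not annihilate anything prematurely because $d_{a_p^j,j}=1$'' does not address the real danger, which is the left multiplication by $x_{a_p^j}$ hitting a monomial that already contains that variable.

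The sign discussion is also not right as stated: it is false that ``at each stage the newly produced letter is smaller than all letters still to be rewritten'' (one has $a_p^j\geq p$, so the new letter is at least as large as every untouched original letter), and transpositions do occur, since computing the left derivation $\partial_p$ requires moving $x_p$ to the front. The correct bookkeeping in the right-to-left order is either: (i) each of the $l_j$ steps contributes the sign $(-1)^{l_j-1}$ from moving the rightmost letter $x_p$ to the front, and these cancel since $l_j(l_j-1)$ is even; or, more cleanly, (ii) for $a\neq b$ the operator $x_a\partial_b$ replaces $x_b$ by $x_a$ \emph{in place} with coefficient $+1$ whenever $x_a$ is absent, and in the order $p=l_j,\dots,1$ no premature vanishing occurs because $a_p^j\geq p$ (so $x_{a_p^j}$ is not among the untouched letters $x_1,\dots,x_{p-1}$) and $a_p^j<a_{p+1}^j<\cdots$ (so it is not among the letters already substituted). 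With the single-factor identity $e^{\langle k\rangle}_{\boldsymbol a_j}v^{\langle k\rangle}_{l_j}=x_{a_1^j}\cdots x_{a_{l_j}^j}$ (coefficient $+1$) established this way, the rest of your argument, including the $\phi_2$ case and the appearance of $\varepsilon(\boldsymbol d)$, goes through; the closing remark about dimensions and spanning is unnecessary for the statement.
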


Consider the action of KZ, qKZ, DD and qDD operators for the Lie algebras $\mathfrak{gl}_{k}$ and $\mathfrak{gl}_{n}$ on $\mathfrak{P}_{kn}$-valued functions of $z_{1},\dots ,z_{n}$, and $\lambda_{1},\dots ,\lambda_{k}$, treating the space $\mathfrak{P}_{kn}$ as a tensor product $\big(V_{\bullet}^{\langle k\rangle}\big)^{\otimes n}$ of $\mathfrak{gl}_{k}$-modules and as a tensor product $\big(V_{\bullet}^{\langle n\rangle}\big)^{\otimes k}$ of $\mathfrak{gl}_{n}$-modules. We will write $F\backsimeq G$ if the operators~$F$ and~$G$ act on the $\mathfrak{P}_{kn}$-valued functions in the same way.

Denote $\boldsymbol{1}^{\langle k\rangle}=(\underset{k}{\underbrace{1,1,\dots ,1}})$ and $\boldsymbol{1}^{\langle n\rangle}=(\underset{n}{\underbrace{1,1,\dots ,1}})$
\begin{Theorem}\label{main}
For any $i=1,\dots,n$ and $a=1,\dots,k$ the following relations hold
\begin{gather}\label{th1}
\nabla_{z_{i}}^{\langle k\rangle}(z,\lambda,\kappa)\backsimeq D_{z_{i}}^{\langle n\rangle}(\lambda, -z,-\kappa),\\
\label{th2}
\nabla_{\lambda_{a}}^{\langle n\rangle}(\lambda,z,\kappa)\backsimeq D_{\lambda_{a}}^{\langle k\rangle}(z,-\lambda,-\kappa),\\
\label{th3}
\widehat{\nabla}_{z_{i}}^{\langle k\rangle}(z,\lambda,\kappa)\backsimeq-\widehat{D}_{z_{i}}^{\langle n\rangle}\big({-}\lambda+\boldsymbol{1}^{\langle k\rangle},z,-\kappa\big),
\\
\label{th4}
\widehat{\nabla}_{\lambda_{a}}^{\langle n\rangle}(\lambda,z,\kappa)\backsimeq-\widehat{D}_{\lambda_{a}}^{\langle k\rangle}\big({-}z+\boldsymbol{1}^{\langle n\rangle},\lambda,-\kappa\big),
\\
\label{th5}
Z_{z_{i}}^{\langle k\rangle}(z,\lambda,\kappa)\backsimeq N_{i}^{\langle n\rangle}(z)Q_{z_{i}}^{\langle n\rangle}(\lambda,-z,-\kappa),
\\
\label{th6}
Z_{\lambda_{a}}^{\langle n\rangle}(\lambda,z,\kappa)\backsimeq N_{a}^{\langle k\rangle}(\lambda)Q_{\lambda_{a}}^{\langle k\rangle}(z,-\lambda,-\kappa),
\end{gather}
where
\begin{gather*}
N_{i}^{\langle n\rangle}(z)=\frac{\prod\limits_{1\leq j<i}C_{ji}^{n}(z_{ji}-\kappa)}{\prod\limits_{i<j\leq n}C_{ij}^{n}(z_{ij})},\qquad N_{a}^{\langle k\rangle}(\lambda)=\frac{\prod\limits_{1\leq b<a}C_{ba}^{k}(\lambda_{ba}-\kappa)}{\prod\limits_{a<b\leq k}C_{ab}^{k}(\lambda_{ab})}.
\end{gather*}
and
\begin{gather}\label{Cab}
C_{ab}^{\langle k\rangle}(t)=\frac{\Gamma\big(t+e_{aa}^{\langle k\rangle}+1\big)\Gamma\big(t-e_{bb}^{\langle k\rangle}\big)}{\Gamma\big(t+e_{aa}^{\langle k\rangle}-e_{bb}^{\langle k\rangle}+1\big)\Gamma(t)},\qquad C_{ij}^{\langle n\rangle}(t)=\frac{\Gamma\big(t+e_{ii}^{\langle n\rangle}+1\big)\Gamma\big(t-e_{jj}^{\langle n\rangle}\big)}{\Gamma\big(t+e_{ii}^{\langle n\rangle}-e_{jj}^{\langle n\rangle}+1\big)\Gamma(t)}.
\end{gather}
\end{Theorem}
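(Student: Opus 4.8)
The plan is to prove the six relations by reducing the action of all operators to the explicit bases $v_{\boldsymbol{d}}^{\langle k\rangle}$ and $v_{\boldsymbol{d}}^{\langle n\rangle}$ provided by the two lemmas, together with the dictionary $\phi_1(v_{\boldsymbol{d}}^{\langle k\rangle}) = x^{\boldsymbol{d}}$ and $\phi_2(v_{\boldsymbol{d}}^{\langle n\rangle}) = \varepsilon(\boldsymbol{d})\, x^{\boldsymbol{d}}$. Since $\mathfrak{P}_{kn}$ decomposes into the finite-dimensional weight subspaces $\mathfrak{P}_{kn}[\boldsymbol{l},\boldsymbol{m}]$, it suffices to check each relation on each such subspace. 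The first step is bookkeeping: spell out how $(e_{ab}^{\langle k\rangle})_{(i)}$ and $(e_{ij}^{\langle n\rangle})_{(a)}$ act on a monomial $x^{\boldsymbol{d}}$, namely $(e_{ab}^{\langle k\rangle})_{(i)}$ replaces the variable $x_{bi}$ by $x_{ai}$ (with the appropriate sign coming from reordering anticommuting variables), and analogously for $\mathfrak{gl}_n$. In particular $(e_{aa}^{\langle k\rangle})_{(i)} x^{\boldsymbol{d}} = d_{ai} x^{\boldsymbol{d}}$ while $\sum_i (e_{aa}^{\langle k\rangle})_{(i)}$ acting on $x^{\boldsymbol{d}}$ gives $m_a$, and the diagonal $\mathfrak{gl}_n$-element $e_{ii}^{\langle n\rangle}$ acts on $\mathfrak{P}_{kn}[\boldsymbol{l},\boldsymbol{m}]$ as $l_i$. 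This immediately matches the multiplication-operator terms $\sum_a \lambda_a (e_{aa})_{(i)}$ in $\nabla$ against $\sum_i z_i (e_{ii})_{(a)}$ in $D$ after the substitutions $z\mapsto\lambda$, $\lambda\mapsto -z$, $\kappa\mapsto-\kappa$; the sign on $\kappa$ is forced by the $\kappa\partial/\partial z_i$ versus $\kappa\partial/\partial\lambda_a$ comparison once one also accounts for the differential terms.

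For \eqref{th1} and \eqref{th2}, the remaining content is the identity of the quadratic terms: $\sum_{j\neq i}\Omega_{(ij)}^{\langle k\rangle}/(z_i-z_j)$ coming from the KZ side must be matched with $\sum_{b\neq a}(e_{ab}^{\langle n\rangle}e_{ba}^{\langle n\rangle} - e_{aa}^{\langle n\rangle})/(\lambda_a-\lambda_b)$ from the DD side (with indices in the appropriate dual roles). The key computation is that under the duality, $\Omega_{(ij)}^{\langle k\rangle}$ acting on $\mathfrak{P}_{kn}$ equals $\big(\sum_{a,b} e_{ba}^{\langle n\rangle}\otimes e_{ab}^{\langle n\rangle}\big)$ placed in the $i$-th and $j$-th $\mathfrak{gl}_n$-tensor slots, up to a diagonal correction, because both express the same second-order differential operator in the $x_{ai}$; one then identifies $\Omega_{(ij)}^{\langle n\rangle\,\text{slots }i,j} = \sum_{a}(e_{ai}^{\langle n\rangle}e_{ia}^{\langle n\rangle} - \dots)$ with the DD pole term. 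I would do this by direct inspection on the monomial basis, checking the sign factors $\varepsilon(\boldsymbol{d})$ are consistent (they cancel in \eqref{th1} because $\phi_1$ has no sign, but must be carried through in \eqref{th2} since the $\mathfrak{gl}_n$-action enters via $\phi_2$). The trigonometric cases \eqref{th3}, \eqref{th4} are handled the same way but now the extra $e_{aa}^2/2$, the $\Omega^\pm$ splitting, and the shift $\boldsymbol 1^{\langle k\rangle}$ must be tracked; the shift arises because $e_{aa}^{\langle n\rangle}$-eigenvalue $l_i$ appears in $\widehat D^{\langle n\rangle}$ in the combination $\lambda_a - e_{aa}/2$ on the KZ side versus $-(e_{aa}^{\langle k\rangle})$ terms on the DD side, and $-l_i + 1$ is precisely what appears after the duality substitution. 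The overall sign $-$ in front of $\widehat D$ comes from the $-\kappa$ together with the antisymmetry of the $z_i\Omega^+ + z_j\Omega^-$ term under $i\leftrightarrow j$.

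The genuinely hard part is \eqref{th5} and \eqref{th6}, the difference case, because here the operators are built from the $R$-matrix $R_{VW}(t)$ and from the series $B_{ab}(t)$, which are defined abstractly rather than by a closed formula. The strategy the introduction announces is to compute the eigenvalues of $R_{V_lV_m}(t)$ on each $\mathfrak{gl}_k$-isotypic component of $V_l\otimes V_m$ using the characterization \eqref{inv}–\eqref{norm} (the $R$-matrix for antisymmetric powers acts as a scalar on each irreducible summand, and that scalar is a ratio of linear factors in $t$ determined by the Casimir eigenvalue via \eqref{comrel}), and separately to compute the eigenvalues of $B_{12}^{\langle 2\rangle}(t)$ on the two-row $\mathfrak{gl}_2$-content, then to check these match under the substitution. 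Concretely I would: (i) restrict to $\mathfrak{P}_{kn}[\boldsymbol l,\boldsymbol m]$, where each tensor factor $V_{l_i}^{\langle k\rangle}$ is an antisymmetric power, so every $R_{ij}^{\langle k\rangle}(t)$ and every $B_{ab}^{\langle k\rangle}(t)$ restricted to the relevant two-factor subspace decomposes into at most a few invariant blocks; (ii) diagonalize $R_{V_{l_i}V_{l_j}}(t)$ block by block — the blocks are indexed by how the tensor product of two antisymmetric powers decomposes, and on each the action is scalar with an explicit rational value in $t$ read off from \eqref{comrel}; (iii) do the same for $B_{ab}(t)$; (iv) assemble $K_i^{\langle k\rangle}$ and $X_a^{\langle n\rangle}$ from these scalars and compare, the correction factor $N_i^{\langle n\rangle}(z)$ being exactly the product of $R$-matrix normalization scalars $C_{ij}$ from \eqref{Cab} that is not reproduced by the $B$-side (this is why $N$ is written as a ratio of $\Gamma$-quotients: the $C_{ij}(t)$ are the scalar eigenvalues of $R$ divided by the eigenvalues of $B$). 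Matching the shift $T_{z_i}$ with $T_{\lambda_a}$ is immediate from $z_i\mapsto\lambda_a$. I expect step (ii)–(iii), the explicit eigenvalue comparison between the $R$-matrix and the $B$-operator on the two-row content, to be the main obstacle, and the cleanest route is to observe that both $R_{V_lV_m}(t)$ and $B$ are governed by the same $\mathfrak{sl}_2$-type recursion, so their eigenvalues on a given weight space are equal up to the normalization discrepancy captured by $C_{ij}(t)$, which one then recognizes as the stated $\Gamma$-function expression.
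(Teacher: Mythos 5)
Your overall strategy does track the paper's: direct verification on $\mathfrak{P}_{kn}$ for \eqref{th1}--\eqref{th4}, and, for \eqref{th5}--\eqref{th6}, reduction to the single-operator identities $R_{ij}^{\langle k\rangle}(t)\backsimeq C_{ij}^{\langle n\rangle}(t)B_{ij}^{\langle n\rangle}(-t)$ and $R_{ab}^{\langle n\rangle}(t)\backsimeq C_{ab}^{\langle k\rangle}(t)B_{ab}^{\langle k\rangle}(-t)$, proved by comparing eigenvalues, with the $\Gamma$-ratios \eqref{Cab} accounting for the normalization discrepancy. The genuine gap is in how you propose to conclude from the eigenvalue computations. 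The two operators you compare are diagonalized by two a priori \emph{different} decompositions of the same space: $R_{12}^{\langle n\rangle}(t)$ is scalar on the $\mathfrak{gl}_n$-irreducible summands $V^{\langle n\rangle}_{\boldsymbol l(m)}$ of $V^{\langle n\rangle}_{m_1}\otimes V^{\langle n\rangle}_{m_2}$ (decomposition \eqref{dec}), while $C_{12}^{\langle 2\rangle}(t)B_{12}^{\langle 2\rangle}(-t)$ is scalar on the $\mathfrak{gl}_2$-isotypic blocks $V^{\langle 2\rangle}_{(\vert\boldsymbol l\vert-m,m)}\otimes W^{\langle 2\rangle}_m$. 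Your claim that the eigenvalues ``match on a given weight space'' is not enough: a weight subspace $\mathfrak{P}_{2,n}[\boldsymbol l,(m_1,m_2)]$ meets several blocks of each decomposition, and equality of the two multisets of eigenvalues there does not give equality of operators. One must show that the two eigenspace decompositions of $\mathfrak{P}_{2,n}$ coincide block by block with the correct pairing of labels $m\leftrightarrow m$. This is precisely the step the paper supplies via the Casimir comparison \eqref{Casimir}--\eqref{Casimir2} (equivalently, the skew $(\mathfrak{gl}_k,\mathfrak{gl}_n)$-duality matching the two-row weight $(\vert\boldsymbol l\vert-m,m)$ with its transpose $\boldsymbol l(m)$), yielding $V^{\langle 2\rangle}_{(\vert\boldsymbol l\vert-m,m)}\otimes W^{\langle 2\rangle}_m\backsimeq V^{\langle n\rangle}_{\boldsymbol l(m)}$; your plan never addresses this identification, and without it the eigenvalue comparison does not close.

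Two smaller points. First, the remark that $R$ and $B$ are ``governed by the same $\mathfrak{sl}_2$-type recursion'' is an expectation, not an argument: the paper actually computes the ratio $\rho_m(t)/\rho_{m-1}(t)$ from \eqref{comrel} by pairing $L_{m_1+m_2-m+1,m}(t)\cdot v_m$ and $M_{m_1+m_2-m+1,m}(t)\cdot v_m$ against $v_{m-1}$ with a Shapovalov-type form on $\mathfrak{P}_{2,n}$ (the $\alpha_m/\beta_m$ computation, with the explicit highest weight vectors \eqref{Vm}), obtaining \eqref{rho}, and it obtains the $B$-eigenvalue \eqref{Bab} by reducing to the case where every factor is the vector representation and quoting \cite[formula~(5.13)]{TV}; some explicit computation of this kind is unavoidable, so your steps (ii)--(iii) carry the real work and should not be waved through. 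Second, in your bookkeeping for \eqref{th1} the signs $\varepsilon(\boldsymbol d)$ do not drop out: the right-hand side $D^{\langle n\rangle}_{z_i}$ is built from $\mathfrak{gl}_n$-generators whose action is the one transported by $\phi_2$, so the sign analysis enters each of \eqref{th1}--\eqref{th4}, not only \eqref{th2}; this is harmless if you compute both actions directly on monomials of $\mathfrak{P}_{kn}$, but as stated it is a slip.
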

\begin{proof}
Verification of relations \eqref{th1}, \eqref{th2}, \eqref{th3}, \eqref{th4} is straightforward. For~\eqref{th5} and~\eqref{th6}, we have to show that
\begin{gather}\label{th5a}
R_{ij}^{\langle k\rangle}(t)\backsimeq C_{ij}^{\langle n\rangle}(t)B_{ij}^{\langle n\rangle}(-t),
\\\label{th6a}
R_{ab}^{\langle n\rangle}(t)\backsimeq C_{ab}^{\langle k\rangle}(t)B_{ab}^{\langle k\rangle}(-t).
\end{gather}
We will prove relation \eqref{th6a}. Relation \eqref{th5a} can be proved similarly.

Note, that both action of $R_{ab}^{\langle n\rangle}(t)$ on $\mathfrak{P}_{kn}$ and action of $C_{ab}^{\langle k\rangle}(t)B_{ab}^{\langle k\rangle}(-t)$ on $\mathfrak{P}_{kn}$ involve only the variables $x_{a1},\dots ,x_{an}$, $x_{b1},\dots ,x_{bn}$. Therefore, it is sufficient to prove~\eqref{th6a} for the case of $k=2$, $a=1$, $b=2$.

The $\mathfrak{gl}_{n}$- module $\mathfrak{P}_{2,n}$ is isomorphic to $V_{\bullet}^{\langle n\rangle}\otimes V_{\bullet}^{\langle n\rangle}$. Consider the submodule $V_{m_{1}}^{\langle n\rangle}\otimes V_{m_{2}}^{\langle n\rangle}\subset \mathfrak{P}_{2,n}$. We have the following decomposition of the $\mathfrak{gl}_{n}$-module:
\begin{gather}\label{dec}
V_{m_{1}}^{\langle n\rangle}\otimes V_{m_{2}}^{\langle n\rangle}=\bigoplus_{m=\max(0,m_{1}+m_{2}-n)}^{\min(m_{1},m_{2})}V_{\boldsymbol{l} (m)}^{\langle n\rangle}.
\end{gather}
Here $\boldsymbol{l} (m)=(2,2,\dots ,2,1,\dots ,1,0,\dots ,0)$, where $2$ repeats $m$ times and $1$ repeats $m_{1}+m_{2}-2m$ times. Denote by $v_{m}$ a highest weight vector of the summand $V_{\boldsymbol{l} (m)}^{\langle n\rangle}$ given by formula \eqref{Vm}.

Define the scalar product on $\mathfrak{P}_{2,n}$ by the rule: $\langle f,f \rangle = 1$, if $f\in\mathfrak{P}_{2,n}$ is a nonzero monomial, and $\langle f,h \rangle=0$, if $f,h\in\mathfrak{P}_{2,n}$ are two non-proprosional monomials.

\begin{Lemma}
	We have $\langle v_{m}, v_{m} \rangle\neq 0$ for every $m$.
\end{Lemma}

The proof is straightforward by formula \eqref{Vm}.

\begin{Lemma}\label{Shap}$\big\langle w_{1}, e_{ij}^{\langle n\rangle}w_{2}\big\rangle=\big\langle e_{ji}^{\langle n\rangle}w_{1},w_{2}\big\rangle$ for any $w_{1},w_{2}\in\mathfrak{P}_{2,n}$, and $i,j=1,\dots ,n$.
\end{Lemma}
The proof is straightforward.

\begin{Corollary}\label{corol}If vectors $w$ and $\tilde{w}$ belong to distinct summands of decomposition~\eqref{dec}, then $\langle w,\tilde{w}\rangle=0$.
\end{Corollary}
\begin{proof}The summands of decomposition~\eqref{dec} are eigenspaces of the operator $I^{\langle n\rangle}$, and the corresponding eigenvalues are distinct.
Lemma~\ref{Shap} implies that the operator $I^{\langle n\rangle}$ is symmetric with respect to the scalar product $\langle\cdot,\cdot\rangle$, which implies the statement.
\end{proof}

Denote
\begin{alignat*}{3}
& L_{ij}(t)=t\big(e_{ij}^{\langle n\rangle}\big)_{(1)}+\sum_{k=1}^{n}\big(e_{ik}^{\langle n\rangle}\big)_{(1)}\big(e_{kj}^{\langle n\rangle}\big)_{(2)},
\qquad &&
M_{ij}(t)=t\big(e_{ij}^{\langle n\rangle}\big)_{(1)}+\sum_{k=1}^{n}\big(e_{kj}^{\langle n\rangle}\big)_{(1)}\big(e_{ik}^{\langle n\rangle}\big)_{(2)},&
\\
& \alpha_{m}(t)=\langle L_{m_{1}+m_{2}-m+1,m}(t)\cdot v_{m},v_{m-1}\rangle,\qquad && \beta_{m}(t)=\langle M_{m_{1}+m_{2}-m+1,m}(t)\cdot v_{m},v_{m-1}\rangle.&
\end{alignat*}
\begin{Lemma}\label{alpha/beta} The functions $\alpha_{m}(t)$ and $\beta_{m}(t)$ are nonzero, and
\begin{gather}\label{a/b}
\frac{\alpha_{m}(t)}{\beta_{m}(t)}=\frac{t+1+m_{1}-m}{t-1+m-m_{2}}.
\end{gather}
\end{Lemma}

The proof is given in Appendix~\ref{appendixA}.

Due to relation \eqref{inv}, for any $m$, there exists a scalar function $\rho_{m}(t)$ such that $R_{12}^{\langle n\rangle}(t) w=\rho_{m}(t)w$ for any $w\in V_{\boldsymbol{l} (m)}^{\langle n\rangle}$.

\begin{Lemma}It holds that
\begin{equation}\label{rho/rho}
\frac{\rho_{m}(t)}{\rho_{m-1}(t)}=\frac{\alpha_{m}(t)}{\beta_{m}(t)}.
\end{equation}
\end{Lemma}
\begin{proof}
Let us single out the term $V_{\boldsymbol{l} (m-1)}$ in the decomposition \eqref{dec}:
\[
V_{m_{1}}^{\langle n\rangle}\otimes V_{m_{2}}^{\langle n\rangle}=V_{\boldsymbol{l} (m-1)}\bigoplus\tilde{V}.
\]
Then we can write $L_{m_{1}+m_{2}-m+1,m}(t)\cdot v_{m}=w+\tilde{w}$, where $w\in V_{\boldsymbol{l} (m-1)}$ and $\tilde{w}\in\tilde{V}$. By the definition of $L_{m_{1}+m_{2}-m+1,m}(t)$, the vector $w$ has weight $\boldsymbol{l}(m-1)$. Therefore, $w=av_{m-1}$ for some scalar~$a$. By Corollary~\ref{corol}, we have
\[
\alpha_{m}(t)=\langle L_{m_{1}+m_{2}-m+1,m}(t)\cdot v_{m}, v_{m-1}\rangle = a\langle v_{m-1}, v_{m-1}\rangle.
\]

Notice that $R_{12}^{\langle n\rangle}(t)\tilde{w}\in\tilde{V}$, because $R$-matrix $R_{12}^{\langle n\rangle}(t)$ acts as a multiplication by a scalar function on each summand of the decomposition \eqref{dec}. Then, by Corollary \ref{corol}, $\big\langle R_{12}^{\langle n\rangle}(t)\tilde{w},v_{m-1}\big\rangle \allowbreak=0$, and
\begin{gather*}\begin{split}&
\big\langle R_{12}^{\langle n\rangle}(t) L_{m_{1}+m_{2}-m+1,m}(t)\cdot v_{m}, v_{m-1}\big\rangle \\
& \qquad{} =\big\langle R_{12}^{\langle n\rangle}(t) w,v_{m-1}\big\rangle
 =\rho_{m-1}(t)a\langle v_{m-1}, v_{m-1}\rangle=\rho_{m-1}(t)\alpha_{m}(t).
\end{split}
\end{gather*}
On the other hand, relation \eqref{comrel} gives
\begin{gather*}
\big\langle R_{12}^{\langle n\rangle}(t) L_{m_{1}+m_{2}-m+1,m}(t)\cdot v_{m}, v_{m-1}\big\rangle\\
\qquad{}
=\big\langle M_{m_{1}+m_{2}-m+1,m}(t)R_{12}^{\langle n\rangle}(t)\cdot v_{m}, v_{m-1}\big\rangle = \rho_{m}(t)\beta_{m}(t).
\end{gather*} Thus we get $\alpha_{m}(t)\rho_{m-1}(t)=\rho_{m}(t)\beta_{m}(t)$, which is relation~\eqref{rho/rho}.
\end{proof}

By formulae \eqref{rho/rho}, \eqref{a/b},
\begin{gather}\label{rho}
\rho_{m}(t)=\prod_{s=1}^{m}\frac{\rho_{s}(t)}{\rho_{s-1}(t)}=\prod_{s=0}^{m-1}\frac{t+m_{1}-s}{t-m_{2}+s},
\end{gather}
where we used that $\rho_{0}=1$ by the normalization condition~\eqref{norm}.

Consider a decomposition of the $\mathfrak{gl}_{2}$-module:
\begin{gather*}
V_{l_{1}}^{\langle 2\rangle}\otimes\dots\otimes V_{l_{n}}^{\langle 2\rangle}=\bigoplus_{0\leq m\leq\vert \boldsymbol{l} \vert / 2}V_{(\vert\boldsymbol{l}\vert -m,m)}^{\langle 2\rangle}\otimes W_{m}^{\langle 2\rangle},
\end{gather*}
where $\vert\boldsymbol{l}\vert = \sum_{i}l_{i}$ and $W_{m}^{\langle 2\rangle}$ are multiplicity spaces. Let
\[
\big(V_{l_{1}}^{\langle 2\rangle}\otimes\dots\otimes V_{l_{n}}^{\langle 2\rangle}\big)[m_{1},m_{2}]_{m}=\big(V_{l_{1}}^{\langle 2\rangle}\otimes\dots\otimes V_{l_{n}}^{\langle 2\rangle}\big)[m_{1},m_{2}]\cap\big(V_{(\vert\boldsymbol{l}\vert -m,m)}^{\langle 2\rangle}\otimes W_{m}^{\langle 2\rangle}\big).
\]
\begin{Lemma}It holds that
\begin{gather}\label{Bab}
B_{12}^{\langle 2\rangle}(t)\big|_{\big(V_{l_{1}}^{\langle 2\rangle}\otimes\dots\otimes V_{l_{n}}^{\langle 2\rangle}\big)[m_{1},m_{2}]_{m}}=\prod_{s=m}^{m_{2}-1}\frac{t+m_{2}-s}{t-m_{1}+s}.
\end{gather}
\end{Lemma}
\begin{proof}
The modules $V_{l_{i}}^{\langle 2 \rangle}$ can have only the following highest weights: $(1,1)$, $(1,0)$, $(0,0)$. Moreover, the modules $V_{(0,0)}^{\langle 2 \rangle}$ and $V_{(1,1)}^{\langle 2 \rangle}$ are one-dimensional and the elements~$e_{12}$,~$e_{21}$, and $e_{11}-e_{22}$ act there by zero. Thus, it is enough to consider the case when $V_{l_{i}}^{\langle 2\rangle}=V_{(1,0)}^{\langle 2\rangle}$ for all~$i$, so formula~\eqref{Bab} follows from \cite[formula~(5.13)]{TV}.
\end{proof}

Comparing formulas \eqref{rho}, \eqref{Bab}, and~\eqref{Cab}, we conclude
\begin{gather}\label{value rel}
\rho_{m}(t)=\big(B_{12}^{\langle 2\rangle}(-t)C_{12}^{\langle 2\rangle}(t)\big)\big|_{\big(V_{l_{1}}^{\langle 2\rangle}\otimes\dots\otimes V_{l_{n}}^{\langle 2\rangle}\big)[m_{1},m_{2}]_{m}}.
\end{gather}

\begin{Lemma}For the Casimir elements $I^{\langle 2\rangle}$ and $I^{\langle n\rangle}$, we have
\begin{equation}\label{Casimir}
I^{\langle 2\rangle}-2\sum_{a=1}^{2}e_{aa}^{\langle 2\rangle}\backsimeq -I^{\langle n\rangle}+n\sum_{i=1}^{n}e_{ii}^{\langle n\rangle}.
\end{equation}
\end{Lemma}
The proof is straightforward.

Recall that in the irreducible $\mathfrak{gl}_{k}$-module $V_{\boldsymbol{l}}$ the element $I^{\langle k \rangle}$ acts as a multiplication by $(\boldsymbol{l},\boldsymbol{l}+2\rho)$. Then it is easy to verify that
\begin{gather}\label{Casimir2}
\left(I^{\langle 2\rangle}-2\sum_{a=1}^{2}e_{aa}^{\langle 2\rangle}\right)\bigg\vert_{V_{(\vert\boldsymbol{l}\vert -m,m)}^{\langle 2\rangle}\otimes W_{m}^{\langle 2\rangle}} = \left(-I^{\langle n\rangle}+n\sum_{i=1}^{n}e_{ii}^{\langle n\rangle}\right)\bigg\vert_{V_{\boldsymbol{l} (m)}^{\langle n\rangle}}.
\end{gather}

Comparing formulae \eqref{Casimir} and \eqref{Casimir2}, and using the fact that the Casimir elements act on distinct irreducible modules as a multiplication by distinct scalar functions, we get that under isomorphisms \eqref{firstI} and \eqref{secondI} the respective images of $V_{(\vert\boldsymbol{l}\vert -m,m)}^{\langle 2\rangle}\otimes W_{m}^{\langle 2\rangle}$ and $V_{\boldsymbol{l} (m)}^{\langle n\rangle}$ in $\mathfrak{P}_{2,n}$ coincide. To indicate that, we will write $V_{(\vert\boldsymbol{l}\vert -m,m)}^{\langle 2\rangle}\otimes W_{m}^{\langle 2\rangle}\backsimeq V_{\boldsymbol{l} (m)}^{\langle n\rangle}$.

We also have $\big(V_{l_{1}}^{\langle 2\rangle}\otimes\dots\otimes V_{l_{n}}^{\langle 2\rangle}\big)[m_{1},m_{2}]\backsimeq \big(V_{m_{1}}^{\langle n\rangle}\otimes V_{m_{2}}^{\langle n\rangle}\big)[l_{1},\dots , l_{n}]$. Therefore $\big(V_{m_{1}}^{\langle n\rangle}\otimes V_{m_{2}}^{\langle n\rangle}\big)[l_{1},\dots , l_{n}]\cap V_{\boldsymbol{l} (m)}^{\langle n\rangle}\backsimeq \big(V_{l_{1}}^{\langle 2\rangle}\otimes\dots\otimes V_{l_{n}}^{\langle 2\rangle}\big)[m_{1},m_{2}]_{m}$. Now we see that~\eqref{value rel} gives us relation between actions of operators $B_{12}^{\langle 2\rangle}(t)$, $C_{12}^{\langle 2\rangle}(t)$, and $R_{12}^{\langle n\rangle}(t)$ on one particular submodule of $\mathfrak{P}_{2,n}$.

Theorem \ref{main} is proved.
\end{proof}

\appendix
\section{Proof of Lemma \ref{alpha/beta}}\label{appendixA}
Let
\begin{equation}
v_{m}=\underset{i=1}{\overset{m}{\prod }}x_{1i}x_{2i}\underset{\varepsilon }{%
\sum }x_{\varepsilon _{1},m+1}x_{\varepsilon _{2},m+2}..x_{\varepsilon
_{m_{1}+m_{2}-2m},m_{1}+m_{2}-m} \label{Vm}
\end{equation}
with $ \{ \varepsilon \} =\big\{ (\varepsilon _{1,}\varepsilon
_{2},\dots,\varepsilon _{m_{1}+m_{2}-2m})\colon \varepsilon _{i}=1\text{ or }2\text{,
} \sum_i \varepsilon _{i}=m_{1}-m+2(m_{2}-m)\big\}$. One can easily prove that $v_{m}$ is a highest weight vector of weight $\boldsymbol{l}(m)$ .

It follows from the construction of the scalar product $\langle\cdot ,\cdot \rangle$ that $\alpha _{m}(t)$
(resp.~$\beta _{m}(t)$) equals the sum of the coefficients of those monomials presented in
\[
L_{m_{1}+m_{2}-m+1,m}(t)\cdot v_{m} \qquad \text{(resp.} \
M_{m_{1}+m_{2}-m+1,m}(t)\cdot v_{m})
\]
 that also appear in $v_{m-1}$.
In fact, all monomials either in
\[
L_{m_{1}+m_{2}-m+1,m}(t)\cdot v_{m} \qquad \text{or in} \  M_{m_{1}+m_{2}-m+1,m}(t)\cdot v_{m}
\] appear in $v_{m-1}$ as well.

We will start with $\alpha _{m}$. Denote $s=m_{1}+m_{2}-m+1$. Let us inspect what
happens when we apply various terms of $L_{sm}(t)$ to $v_{m}$.
For the sum $\underset{k=1}{\overset{n}{\sum }}(e_{sk})_{(1)}(e_{km})_{(2)}$, we can assume that $m\leq k<s$. If $k>m$, then the operator $(e_{sk})_{(1)}(e_{km})_{(2)}$ will send a monomial in $v_{m}$ to zero if and only if this monomial does not
depend on $x_{1k}$. That is, we look at all terms in $v_{m}$ corresponding to $\varepsilon_{k-m}=1$. There are $C_{m_{1}+m_{2}-2m-1}^{m_{2}-m}$ such terms with the same contribution $(-1)^{m_{1}+m_{2}+m}$. We leave the details of this calculation to a reader. Under the assumption $m<k<s$, there are $m_{1}+m_{2}-2m$ different values of $k$, which yield the overall contribution $(-1)^{m_{1}+m_{2}+m}(m_{1}+m_{2}-2m)C_{m_{1}+m_{2}-2m-1}^{m_{2}-m}$ to $\alpha_{m}(t)$.

If $k=m$, then we have $(e_{sk})_{(1)}(e_{km})_{(2)}\cdot v_{m}=(e_{sm})_{(1)}\cdot
v_{m}$. Therefore all $C_{m_{1}+m_{2}-2m}^{m_{1}-m}$ terms in $v_{m}$ equally contribute $(-1)^{m_{1}+m_{2}+m}(m_{1}+m_{2}-2m)$.

Finally, the term $t(e_{ij})_{(1)}$ in $L_{sm}(t)$ generates the contribution $t(-1)^{m_{1}+m_{2}+m}C_{m_{1}+m_{2}-2m}^{m_{1}-m}$ to~$\alpha_{m}(t)$, which can be seen similarly to the case $k=m$ considered above.

Thus we obtained
\begin{gather*}
(-1)^{m_{1}+m_{2}+m}\alpha _{m}=(t+1)C_{m_{1}+m_{2}-2m}^{m_{1}-m}+(m_{1}+m_{2}-2m)C_{m_{1}+m_{2}-2m-1}^{m_{2}-m}.
\end{gather*}

The similar arguments give us
\begin{gather*}
(-1)^{m_{1}+m_{2}+m} \beta _{m}=(t-1)C_{m_{1}+m_{2}-2m}^{m_{1}-m}-(m_{1}+m_{2}-2m)C_{m_{1}+m_{2}-2m-1}^{m_{1}-m}.
\end{gather*}

Since\[(m_{1}+m_{2}-2m)C_{m_{1}+m_{2}-2m-1}^{m_{1}-m}=(m_{1}-m)C_{m_{1}+m_{2}-2m}^{m_{1}-m},\]
and
\[
(m_{1}+m_{2}-2m)C_{m_{1}+m_{2}-2m-1}^{m_{2}-m}
=(m_{2}-m)C_{m_{1}+m_{2}-2m}^{m_{1}-m}.
\]
Lemma~\ref{alpha/beta} is proved.

\subsection*{Acknowledgements}
V.~Tarasov was supported in part by Simons Foundation grant 430235.

\pdfbookmark[1]{References}{ref}
\LastPageEnding


\begin{thebibliography}{99}
\footnotesize\itemsep=0pt

\bibitem{CW}
Cheng S.-J., Wang W., Dualities and representations of {L}ie superalgebras,
 \textit{Graduate Studies in Mathematics}, Vol.~144, \href{https://doi.org/10.1090/gsm/144}{Amer. Math. Soc.}, Providence, RI, 2012.

\bibitem{EFK}
Etingof P., Frenkel I., Kirillov A., Lectures on representation theory and
 {K}nizhnik--{Z}amolodchikov equations, \textit{Mathematical Surveys and
 Monographs}, Vol.~58, \href{https://doi.org/10.1090/surv/058}{Amer. Math. Soc.}, Providence, RI, 1998.

\bibitem{EV}
Etingof P., Varchenko A., Dynamical {W}eyl groups and applications,
 \href{https://doi.org/10.1006/aima.2001.2034}{\textit{Adv. Math.}} \textbf{167} (2002), 74--127, \href{https://arxiv.org/abs/math.QA/0011001}{arXiv:math.QA/0011001}.

\bibitem{FMTV}
Felder G., Markov Y., Tarasov V., Varchenko A., Differential equations
 compatible with {KZ} equations, \href{https://doi.org/10.1023/A:1009862302234}{\textit{Math. Phys. Anal. Geom.}} \textbf{3}
 (2000), 139--177, \href{https://arxiv.org/abs/math.QA/0001184}{arXiv:math.QA/0001184}.

\bibitem{TV3}
Tarasov V., Varchenko A., Difference equations compatible with trigonometric
 {KZ} differential equations, \href{https://doi.org/10.1155/S1073792800000441}{\textit{Int. Math. Res. Not.}} \textbf{2000}
 (2000), 801--829, \href{https://arxiv.org/abs/math.QA/0002132}{arXiv:math.QA/0002132}.

\bibitem{TV}
Tarasov V., Varchenko A., Duality for {K}nizhnik--{Z}amolodchikov and dynamical
 equations, \href{https://doi.org/10.1023/A:1019787006990}{\textit{Acta Appl. Math.}} \textbf{73} (2002), 141--154,
 \href{https://arxiv.org/abs/math.QA/0112005}{arXiv:math.QA/0112005}.

\bibitem{TV2}
Tarasov V., Varchenko A., Dynamical differential equations compatible with
 rational {qKZ} equations, \href{https://doi.org/10.1007/s11005-004-6363-z}{\textit{Lett. Math. Phys.}} \textbf{71} (2005),
 101--108, \href{https://arxiv.org/abs/math.QA/0403416}{arXiv:math.QA/0403416}.

\bibitem{TL1}
Toledano-Laredo V., The trigonometric {C}asimir connection of a simple {L}ie
 algebra, \href{https://doi.org/10.1016/j.jalgebra.2010.05.025}{\textit{J.~Algebra}} \textbf{329} (2011), 286--327,
 \href{https://arxiv.org/abs/1003.2017}{arXiv:1003.2017}.

\bibitem{TL2}
Toledano-Laredo V., Yang Y., The elliptic {C}asimir connection of a simple
 {L}ie algebra, \href{https://arxiv.org/abs/1805.12261}{arXiv:1805.12261}.

\bibitem{VY}
Vicedo B., Young C., {$(\mathfrak{gl}_M,\mathfrak{gl}_N)$}-dualities in
 {G}audin models with irregular singularities, \href{https://doi.org/10.3842/SIGMA.2018.040}{\textit{SIGMA}} \textbf{14}
 (2018), 040, 28~pages, \href{https://arxiv.org/abs/1710.08672}{arXiv:1710.08672}.

\end{thebibliography}
\end{document}